\numberwithin{equation}{section}
\theoremstyle{plain}
\newtheorem{theorem}{Theorem}[section]
\newtheorem{lemma}{Lemma}[section]
\theoremstyle{definition}
\newtheorem{remark}{Remark}
\numberwithin{equation}{section}
\def\e{\varepsilon}
\def\g{\gamma}
\def\G{\Gamma}
\def\l{\lambda}
\def\p{\partial}
\def\D{\Delta}
\def\E{\mbox{\rm e}}
\def\a{\alpha}
\def\b{\beta}
\def\Odr{O}
\def\H{W_2}
\def\iu{\mathrm{i}}
\def\di{\,d}
\def\Op{\mathcal{H}}
\def\hf{\mathfrak{h}}
\def\cA{\mathcal{A}}
\def\cL{\mathcal{L}}
\def\H{W_2}
\DeclareMathOperator{\spec}{\sigma}
\DeclareMathOperator{\supp}{supp}
\DeclareMathOperator{\RE}{Re}
\begin{document}

\title{Gap opening in two-dimensional periodic systems}
\author{D.I. Borisov$^1$ and P. Exner$^2$}

\date{\empty}

\maketitle

\allowdisplaybreaks

\begin{abstract}
We present a new method of gap control in two-dimensional periodic systems with the perturbation consisting of a second-order differential operator and a family of narrow potential `walls' separating the period cells in on direction. We show that under appropriate assumptions one can open gaps around points determined by dispersion curves of the associated `waveguide' system, in general any finite number of them, and to control their widths in terms of the perturbation parameter.  Moreover, a distinctive  feature of those gaps is that their  edge values  are attained by the corresponding band functions  at  internal points of the Brillouin zone.
\end{abstract}

\footnotetext[1]{Institute of Mathematics, Ufa Federal Research Center, Russian Academy of Sciences, Ufa, Russia,
Bashkir State University, Ufa, Russia, and
University of Hradec Kr\'alov\'e,  Hradec Kr\'alov\'e, Czech Republic
\\
Email: borisovdi@yandex.ru
}

\footnotetext[2]{Doppler Institute for Mathematical Physics and Applied Mathematics, Czech
Technical University in Prague, B\v{r}ehov\'{a} 7, 11519 Prague, and Nuclear Physics
Institute ASCR, 25068 \v{R}e\v{z} near Prague, Czech Republic
\\
Email: exner@ujf.cas.cz
}

\section{Introduction} \label{s:intro}

Spectral properties of second-order differential operators with periodic coefficients are of considerable interest from more than one reason. On the one hand, it is an interesting mathematical problem with a rich structure. At the same time, such operators are important in description of physical systems, in the first place crystals of various types. To illustrate how involved these problem can be mathematically, it is enough to recall the famous Bethe-Sommerfeld conjecture claiming that in system periodic in more than one direction the number of open spectral gaps is finite \cite{SoBe}. The reasoning that led to it was so natural that the physics community adopted it immediately, however, it took decades to establish its validity in a rigorous mathematical way, cf.~\cite{DaTr, Ka, Pa, PS17, PaSo, Sk79, Sk85, V} and references therein. Nowadays it is done for a large number of systems including a more complicated behavior in some borderline situations \cite{Bo18}.

On the other hand, there are situations, where intuition may mislead you. It was widely believed, for instance, that band edges correspond to quasimomenta laying at the boundary of the corresponding Brillouin zone or at the center of this zone. Was this the case, the task of finding the spectral bands would be easier as the manifold to explore would have one dimension less. It was shown, however, that such a claim does not hold generally in system periodic in more than one direction \cite{HKSW} and this result also extends under additional assumptions to systems periodic in one dimension \cite{EKW}. The corresponding counterexamples used (discrete or metric) quantum graphs rather than Schr\"odinger operators. For those an example of dispersion curves with extrema in the interior of the Brillouin zone was constructed in \cite{BP13b}, while in higher dimensions the question  remained up to now  open.

This is not the only issue  we are going to discuss here. Our goal in  the present paper  is  also  to address  another important problem, the gap control.  Recall that this  question acquired importance recently in connection with the progress in the physics of metamaterials, in other words, artificially prepared periodically structured substances. As the band structure plays decisive role in the 
conductivity properties of the materials, one wants to know whether it is possible to open a gap at a prescribed energy value and to control its width. Various models in which this goal can be achieved have been constructed. In one dimension one can use, for instance, an array of `cells' connected by narrow `windows' \cite{Bo15, Bo16}, 
a  waveguide with a periodically distributed
small windows \cite{BP13a} or a waveguide with a periodic perturbation \cite{JPA13, Na}. An alternative  way proposed  is to place into a waveguide a periodic array of small $\delta'$ traps \cite{EK15}.

In higher dimensions there are fewer results. Khrabustovskyi constructed a model in which gaps can be opened with the help of a lattice of small `pierced resonators' \cite{Kh} and in \cite{EK18} a similar result was obtained by means of a lattice of $\delta'$ traps. The goal of this paper is to present a new method of gap control in a two-dimensional system with a periodic perturbation described by a `small' second order differential operator consisting of raising high and narrow `potential walls' in one direction. We compare this system to the family of parallel waveguides, with the wall replaced by the Dirichlet condition, and show that under appropriate assumption gaps may open around the points where the dispersion curves of the waveguide cross,  in general any finite number of them. Moreover, we are able to control the gap width in terms of the perturbation parameter.  Equally important, the construction answers at the same time the question mentioned above: we  show that the edge values  of the opened gaps are attained by the band functions at internal points of the Brillouin zone.

Let us describe briefly the contents of the paper. In the next section we formulate the problem properly and state our main results as Theorem~\ref{th2.1}. The rest, Sections~\ref{s:approxband} and \ref{proof 2.1} is devoted to the proof.

\section{Formulation of the problem and main result} \label{s:main}

Let $x\in\mathds{R}^2$ be a point expressed through its Cartesian coordinate, $x=(x_1,x_2)$, and let $\Op_0$ be the negative Laplacian in $\mathds{R}^2$. The operator $\Op_0$ is self-adjoint in $L_2(\mathds{R}^2)$ with the domain $\H^2(\mathds{R}^2)$. By $\G$ we denote the rectangular lattice $a_1 \mathds{Z}\times a_2 \mathds{Z}$, where $a_1$, $a_2$ are positive real constants, and $\square:=\{x:\ 0<x_1<a_1,\, 0<x_2<a_2\}$ stands for the corresponding periodicity cell.

Next we introduce the following operator in $L_2(\mathds{R}^2)$,
\begin{equation*}
\cL:=
\frac{\p\ }{\p x_1} A_{11}(x)\frac{\p\  }{\p x_1} + \iu
\Big(A_1(x)\frac{\p\ }{\p x_1}+\frac{\p\ }{\p x_1} A_1(x) \Big) + A_0(x)
\end{equation*}
with the domain $\H^2(\mathds{R}^2)$, where $A_{11}, A_1\in C^1(\mathds{R}^2)$, $A_0\in C(\mathds{R}^2)$ are real functions periodic with respect to the lattice $\G$. The functions $A_{11}$, $A_1$ and $A_0$ are assumed to vanish in the vicinity of the boundary $\p\square$. The operator $\cL$ is obviously symmetric.

By $V$ we denote a real function of the variable $x_2$ satisfying the conditions
\begin{equation}\label{2.2}
\begin{aligned}
&\supp V \subseteq[-2a_3,2a_3], \qquad
V\in C[-2a_3,2a_3],\qquad V\geqslant0,
\\
&  V(x_2)\geqslant c_0>0  \quad\text{in}\quad[- a_3, a_3],
\end{aligned}
\end{equation}
where $a_3$ and $c_0$ are fixed constants. 

The main object of our investigation  is the operator in $L_2(\mathds{R}^2)$ defined as
\begin{equation*}
\Op_\e:=\Op_0 + \e^\a \cL  + \e^{-\frac{3}{2}} V_\e,
\qquad
V_\e=V_\e(x_2):=\sum\limits_{p\in\mathds{Z}} V\left(\frac{x_2-p a_2}{\e}\right),
\end{equation*}
where $\e$ is a small real parameter and $\a\in\big(\tfrac{1}{3},\tfrac{1}{2}\big)$ is a fixed constant. The graphs of the functions $V$ and $V_\e$ are sketched in Figure~1.

It is easy to see that the operator $\Op_\e$ is self-adjoint on the domain $\H^2(\mathds{R}^2)$. Moreover, the operator $\Op_\e$ is periodic with respect to the lattice $\G$, and consequently, its spectrum has a band-gap structure. The main aim of the present work is to study the existence of the spectral gaps in the spectrum of $\Op_\e$ for the parameter small $\e$ enough.

In order to state our main result, we need to introduce additional notations. We begin with the standard formula for the spectrum of the operator $\Op_\e$,
\begin{equation}\label{2.6}
\spec(\Op_\e)=\bigcup\limits_{k\in\mathds{N}} \{E_\e^{(k)}(\tau):\, \tau\in\square^*\},
\end{equation}
where $\tau=(\tau_1,\tau_2)$ is the quasi-momentum and
\begin{equation*}
\square^*:=\left\{\tau:\ |\tau_1|\leqslant \frac{\pi}{a_1},\, |\tau_2|\leqslant \frac{\pi}{a_2}\right\}
\end{equation*}
is the basic cell of the lattice dual to $\Gamma$, or in the physical terminology the Brillouin zone. The symbols $E_\e^{(k)}(\tau)$ stand for the band functions, that is, these are the eigenvalues of the operator $\Op_\e(\tau)$ in $L_2(\square)$ with the differential expression
\begin{equation}\label{2.7}
\Op_\e(\tau)=-\D+\e^\a \cL + \e^{-\frac{3}{2}}V_\e(x)
\end{equation}
subject to the standard quasiperiodic boundary conditions, in other words, the domain of the operator $\Op_\e(\tau)$ consists of the functions $u\in\H^2(\square)$,
which satisfy the boundary conditions
\begin{equation}\label{2.9}
\E^{\iu\tau_j a_j} u\big|_{x_j=0}=u\big|_{x_j=a_j},\quad \E^{\iu\tau_j a_j}\frac{\p u}{\p x_j}\bigg|_{x_j=0}=\frac{\p u}{\p x_j}\bigg|_{x_j=a_j},\quad j=1,2.
\end{equation}
The eigenvalues $E_\e^{(k)}(\tau)$ are supposed to be arranged in the ascending order with the multiplicity taken into account.

\begin{figure}
\includegraphics[scale=0.325]{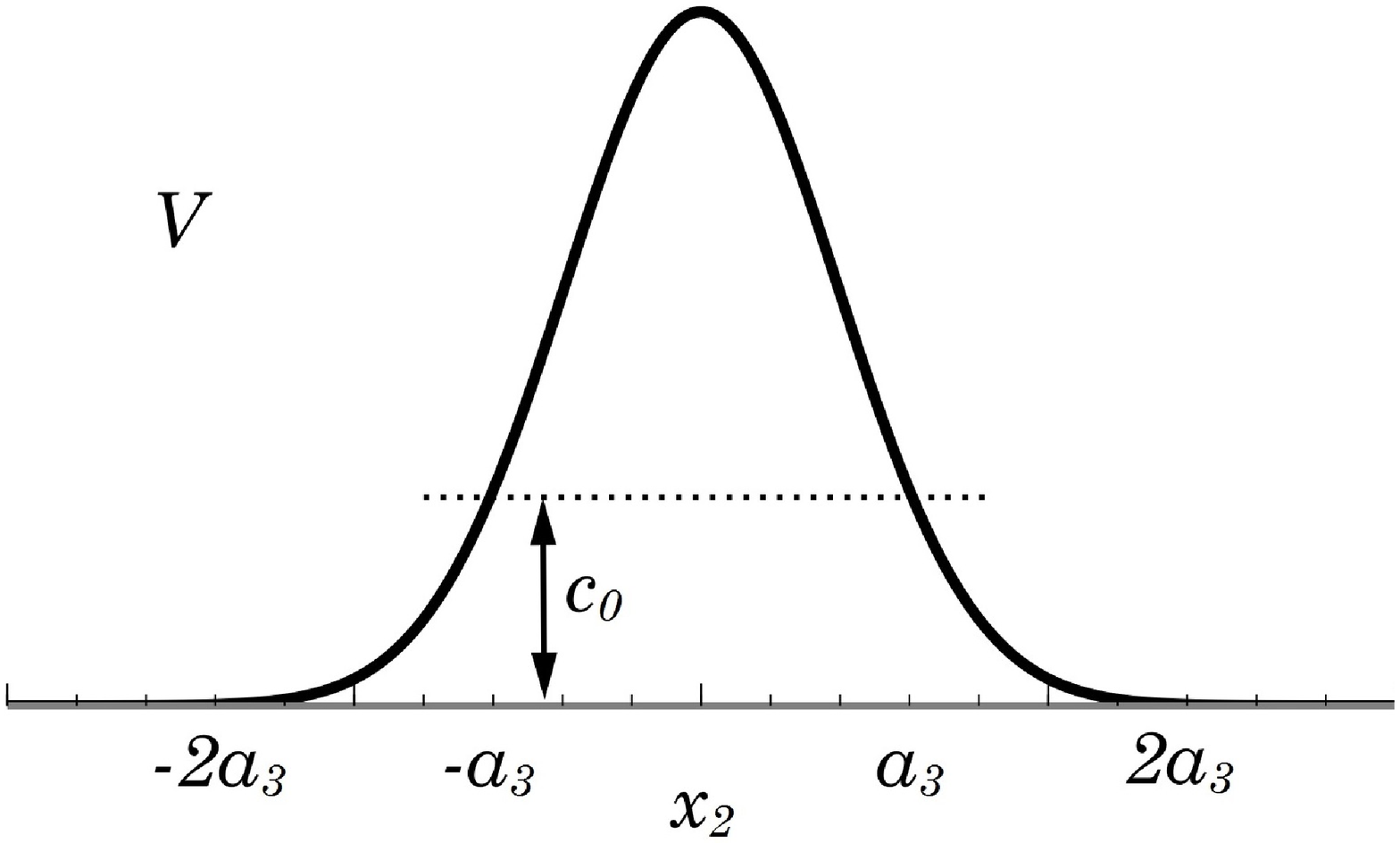}~\hspace{1 true cm}~
\includegraphics[scale=0.325]{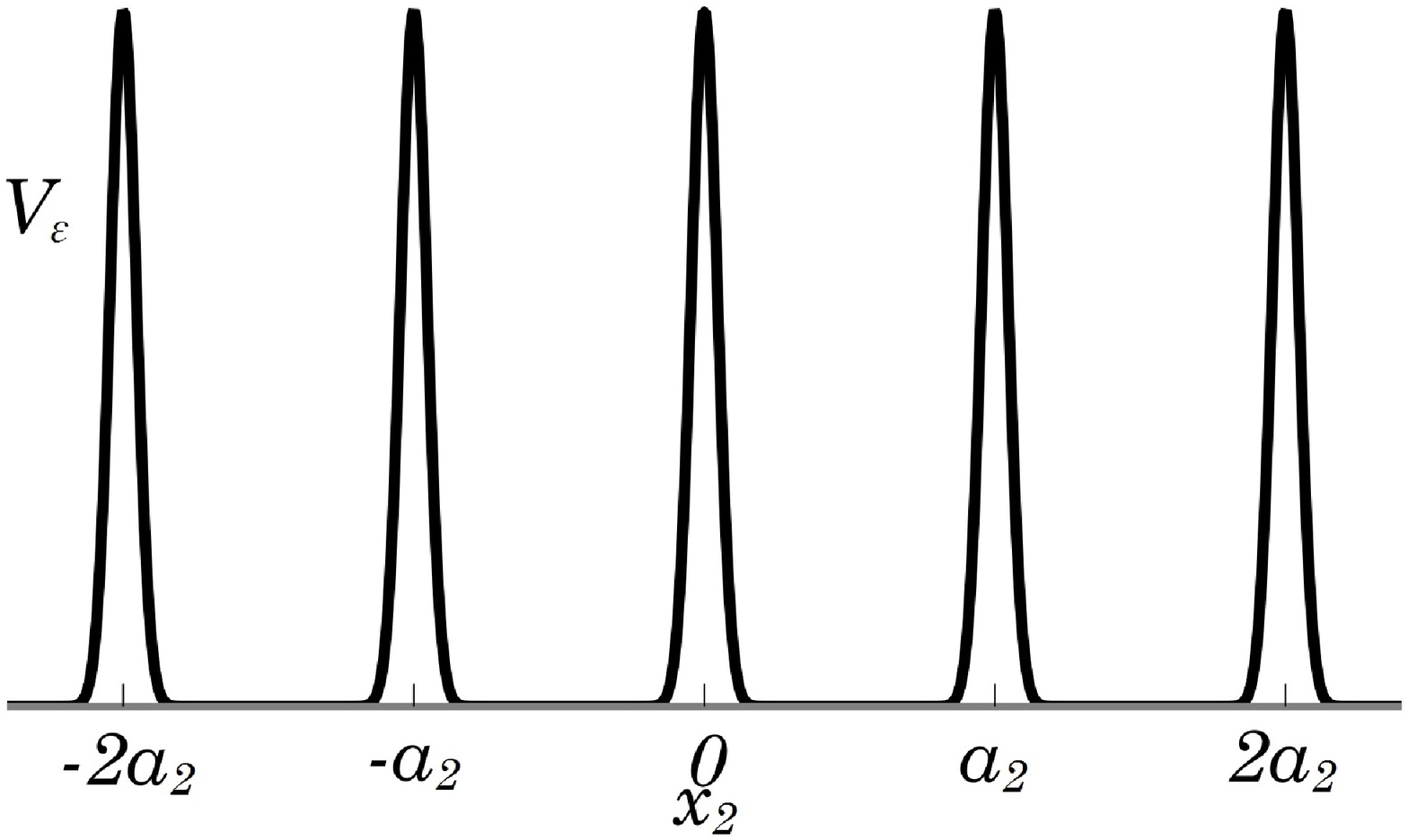}

\caption{\small Sketched graphs of $V$ and $V_\e$}
\end{figure}
We shall show later in the proofs that as $\e\to0+$, the eigenvalues $E_\e^{(k)}(\tau)$ converge to the eigenvalues of the negative Laplacian on $\square$ subject to the quasiperiodic boundary conditions on the lateral boundaries of $\square$ and to the Dirichlet condition as $x_2=0$ and $x_2=a_2$. Up to the exponential factor $\E^{-\iu(\tau_1 x_1+\tau_2 x_2)}$, the eigenfunctions and the corresponding eigenvalues of such limiting operator read as
\begin{align}
&\psi_0^{(n,p)}(x,\tau_2):= \frac{\sqrt{2}}{\sqrt{a_1a_2}}\,
  \E^{\iu \frac{2\pi n}{a_1} x_1}\,\E^{-\iu \tau_2 x_2}\sin\frac{\pi p}{a_2}x_2,\nonumber
\\
&E^{(n,p)}_0(\tau_1):=\left(\tau_1+\frac{2\pi n}{a_1}\right)^2 + \frac{\pi^2 p^2}{a_2^2}.\label{3.22a}
\end{align}
Assume that two such eigenvalues  coincide for some $\tau_1$, namely, let $E_0\in\left(\tfrac{\pi^2}{a_2^2},\tfrac{9\pi^2}{a_2^2}\right)$ be a point such that
\begin{equation}\label{2.10}
E_0=E_0^{(n,1)}(\tau_0)=
\left(\tau_0 + \frac{2\pi n}{a_1}\right)^2 + \frac{\pi^2}{a_2^2} =
E_0^{(m,2)}(\tau_0)=
\left(\tau_0 + \frac{2\pi m}{a_1}\right)^2 + \frac{4\pi^2}{a_2^2}
\end{equation}
for some $\tau_0\in\big[-\tfrac{\pi}{a_1},\tfrac{\pi}{a_1}\big]$ and  some integer $n$, $m$. It is easy to see that triples for which the condition \eqref{2.10} is satisfied always exist, and moreover, we may suppose without loss of generality that at least one of
$n$ and $m$  is nonzero. It is also clear that condition (\ref{2.10}) holds also with $\tau_0$, $n$, $m$ replaced by $-\tau_0$, $-n$, $-m$.
We denote
\begin{align*}
\cL(\tau_1):=&- \Big(\iu \frac{\p\ }{\p x_1}-\tau_1\Big)
A_{11}(x) \Big(\iu \frac{\p\ }{\p x_1}-\tau_1\Big)+ A_1(x) \Big(\iu\frac{\p\ }{\p x_1}-\tau_1\Big)
\\
&+\Big(\iu\frac{\p\ }{\p x_1}-\tau_1\Big) A_1(x)  + A_0(x),
\end{align*}
which is exactly the operator after the Gelfand transform made in the variable $x_1$. The term $\e^\a \cL$ is a regular perturbation in the operator $\Op_\e(\tau)$. Making the regular perturbation theory for the eigenvalues $E_\e^{(k)}(\tau)$ in the vicinity of the point $E_0$, in the further proofs we are led to the matrix
\begin{align*}
M^{(0)}(\tau_1):=&\begin{pmatrix}
M^{(0)}_{11}(\tau_1) & M^{(0)}_{12}(\tau_1)
\\
M^{(0)}_{21}(\tau_1) & M^{(0)}_{22}(\tau_1)
\end{pmatrix}
\\
:=&
\begin{pmatrix}
\big(\psi_0^{(n_*,1)},\cL(\tau_1) \psi_0^{(n_*,1)}\big)_{L_2(\square)} &\big(\psi_0^{(n_*,1)},\cL(\tau_1) \psi_0^{(m_*,2)}\big)_{L_2(\square)}
\\
\big(\psi_0^{(m_*,2)},\cL(\tau_1) \psi_0^{(n_*,1)}\big)_{L_2(\square)} &\big(\psi_0^{(m_*,2)},\cL(\tau_1) \psi_0^{(m_*,2)}\big)_{L_2(\square)}
\end{pmatrix},
\\
n_*:=&\left\{
\begin{aligned}
& n\quad\text{as}\quad \tau_1\geqslant 0,
\\
-&n\quad\text{as}\quad \tau_1<0,
\end{aligned}
\right.\qquad m_*:=\left\{
\begin{aligned}
& m\quad\text{as}\quad \tau_1\geqslant 0,
\\
-&m\quad\text{as}\quad \tau_1<0.
\end{aligned}
\right.
\end{align*}
whose eigenvalues will determine some of leading terms in the asymptotics for $E_\e^{(k)}(\tau)$. The maximal and minimal values of these terms are
\begin{equation}\label{2.16}
\b_l:=\max\{\b_-(\tau_0),\b_-(-\tau_0)\},\qquad \b_r:=\min\{\b_+(\tau_0),\b_+(-\tau_0)\},
\end{equation}
where
\begin{align}
&\begin{aligned}
\b_\pm(\tau_1):=
\pm\frac{|M_{12}^0(\tau_1)|}{|k_3(\tau_1)|}\sqrt{k_3^2(\tau_1)-k_1^2(\tau_1)}-\frac{k_1(\tau_1) k_4(\tau_1)}{k_3(\tau_1)}+k_2(\tau_1),
\label{j2.12}
\end{aligned}
\\
&
\begin{aligned}
k_1(\tau_1)&:=-\frac{2\pi(n_*+m_*)}{a_1}-\tau_1, & k_2(\tau_1)&:=-\frac{M_{11}^{(0)}(\tau_1)+M_{22}^{(0)}(\tau_1)}{2},\\
k_3(\tau_1)&:= \frac{2\pi}{a_1}(n_*-m_*),& k_4(\tau_1)&:=\frac{M_{22}^{(0)}(\tau_1)-M_{11}^{(0)}(\tau_1)}{2}.
\end{aligned}\nonumber
\end{align}
We observe that although the functions $\psi_0^{(n,1)}$ and $\psi_0^{(m,2)}$ depend on $\tau_2$, this is \textit{not} the case for the above introduced functions and matrices. By $\tau_{1,l}$ we denote one of the values $\pm\tau_0$, at which the maximum is attained in the formula for $\b_l$. In the same way, by $\tau_{1,r}$ we denote one of the values $\pm\tau_0$, at which the minimum is attained in the formula for $\b_r$.

Finally, the asymptotics of $E_\e^{(k)}(\tau)$ in the vicinity of $E_0$ involve one more leading term produced by the term $\e^{-\frac{3}{2}}V_\e$ in the operator $\Op_\e(\tau)$. These terms are
\begin{align*}
  &  \l_r := -\frac{8\pi^2}{ a_2^3 \langle V\rangle} \max\big\{|e_r^{(1)}|^2, |e_r^{(2)}|^2\big\},
\quad \l_l := -\frac{8\pi^2}{ a_2^3 \langle V\rangle}
\min\big\{|e_l^{(1)}|^2, |e_l^{(2)}|^2\big\},
\end{align*}
which is well defined because $\langle V\rangle:= \int_{-a_3}^{a_3} v(x)\,\mathrm{d}x\,$ is positive by assumption. To explain the meaning of $e_{l/r}^{(j)}$, we let
\begin{align*}
&t_r=-\frac{k_1(\tau_{1,r}) \big|M_{12}^{(0)}(\tau_{1,r})\big|}{\big|k_3(\tau_{1,r})\big| \sqrt{
k_3^2(\tau_{1,r})-k_1^2(\tau_{1,r})}}-\frac{k_4(\tau_{1,r}) }{k_3(\tau_{1,r})},
\\
&t_l=\frac{k_1(\tau_{1,l}) \big|M_{12}^{(0)}(\tau_{1,l})\big|}{\big|k_3(\tau_{1,l})\big|\sqrt{
k_3^2(\tau_{1,l})
-k_1^2
(\tau_{1,l}) }}-\frac{k_4(\tau_{1,l}) }{k_3(\tau_{1,l})}
\end{align*}
and  we consider the matrices
\begin{equation*}
M^{(0)}(\tau_1)-2t M^{(1)}(\tau_1),\qquad M^{(1)}(\tau_1):=\mathrm{diag}\,\left\{  \frac{2\pi n_*}{a_1}+\tau_1,  \frac{2\pi m_*}{a_1}+\tau_1
\right\},
\end{equation*}
where $(\tau_1,t)$ is either $(\tau_{1,l},t_l)$ or $(\tau_{1,r},t_r)$. Each of these two matrices has two real eigenvalues. By $e_{l}=
\begin{pmatrix}
e_{l}^{(1)}
\\
e_{l}^{(2)}
\end{pmatrix}$ we denote the normalized in $\mathds{R}^2$ eigenvector associated with the smaller eigenvalue of the above matrix with $(\tau_1,t)=(\tau_{1,l},t_l)$, while $e_{r}=
\begin{pmatrix}
e_{r}^{(1)}
\\
e_{r}^{(2)}
\end{pmatrix}$ is the normalized in $\mathds{R}^2$ eigenvector associated with the greater eigenvalue of the above matrix with $(\tau_1,t)=(\tau_{1,r},t_r)$.


Now we are in position to formulate our main result.

\begin{theorem}\label{th2.1}
Assume that the following inequalities hold,
\begin{equation}\label{2.5}
M_{12}^{(0)}(\pm\tau_0)\ne 0,\quad \left(\tau_0 + \frac{2\pi n}{a_1}\right)\left(\tau_0 + \frac{2\pi m}{a_1}\right)<0
 \quad\text{and}\quad
\b_l<\b_r.
\end{equation}
Then there is an $\e_0>0$ such that for all $\e\in(0,\e_0)$ the spectrum of the operator $\Op_\e$ has a gap $\big(\eta_l(\e),\eta_r(\e)\big)$ with the edges that behave asymptotically as
\begin{equation*}
\eta_l(\e)=E_0+\e^\a\b_l+\e^\frac{1}{2}\l_l+\Odr(\e^{2\a}), \quad \eta_r(\e)=E_0+\e^\a\b_r+\e^\frac{1}{2}\l_r
+\Odr\big(\e^{2\a}),
\end{equation*}
and the corresponding band functions  $E_{l/r}(\e,\tau)$ of the operator $\Op_\e$ attain their extrema, i.e. the gap edges $\big(\eta_l(\e), \eta_r(\e)\big)$, at the points  $\tau_{l/r}(\e):=\big(\tau_{l/r}^{(1)}(\e),\tau_{l/r}^{(2)}(\e)\big)$ satisfying the asymptotics
\begin{equation*}
\tau_{l/r}^{(1)}(\e)=\tau_{1,l/r}+\e^\a t_{l/r}+\Odr(\e^{1/2}),\qquad \tau_{l/r}^{(2)}(\e)=\tau_{2,l/r} +\Odr(\e^{1/2}).
\end{equation*}
Here $\tau_{2,r}$ and, respectively, $\tau_{2,l}$, is equal to one of the values $\pm\tfrac{\pi}{a_2}$ or $0$  if $|e_r^{(1)}|>|e_r^{(2)}|$ and, respectively, $|e_l^{(1)}|<|e_l^{(2)}|$ and to one of the values $\pm\tfrac{\pi}{2a_2}$ if $|e_r^{(1)}|<|e_r^{(2)}|$ and, respectively, $|e_l^{(1)}|>|e_l^{(2)}|$.
\end{theorem}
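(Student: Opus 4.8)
The plan is to argue fiberwise. By the Floquet--Bloch decomposition recorded in \eqref{2.6}, a value $E$ sits in a gap of $\spec(\Op_\e)$ precisely when $E\neq E_\e^{(k)}(\tau)$ for all $k$ and all $\tau\in\square^*$, so it suffices to control the band functions near $E_0$ uniformly in $\tau$ and to keep every other band away from the candidate interval. I would first analyze the fiber $-\D+\e^{-3/2}V_\e$ (with $\cL$ switched off) by matched asymptotics: an outer expansion on $\square$ away from the walls, and an inner boundary-layer expansion in the stretched variable $\xi=(x_2-pa_2)/\e$ near each wall. The leading inner balance forces $u$ and $u'$ to be continuous, while the $O(\e^{1/2})$ correction generates a jump $[u']=\e^{-1/2}\langle V\rangle\,u$; thus each wall acts as a strong $\delta$-interaction of strength $\sim\e^{-1/2}\langle V\rangle$. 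As $\e\to0$ this imposes the Dirichlet condition on $a_2\mathds{Z}$ and recovers the limiting eigendata \eqref{3.22a}, while the reciprocal strength produces the $O(\e^{1/2})$ eigenvalue shifts, and following how the quasiperiodic phase $\tau_2 a_2$ enters the matching across the walls is what generates the $\tau_2$-dependence absent from \eqref{3.22a}.

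Near the crossing \eqref{2.10} I would run degenerate perturbation theory on the two-dimensional almost-eigenspace spanned by $\psi_0^{(n_*,1)}$ and $\psi_0^{(m_*,2)}$, via a Feshbach--Schur reduction of $\Op_\e(\tau)-E$ to that subspace. Setting $\tau_1=\tau_{1,l/r}+\e^\a t$ and expanding, the reduced $2\times2$ problem is governed by $M^{(0)}(\tau_1)+2tM^{(1)}(\tau_1)$, the regular part $\e^\a\cL$ together with the group-velocity term, corrected at order $\e^{1/2}$ by the diagonal wall contribution found above. The admissible ordering of scales is dictated by $\a\in(\tfrac13,\tfrac12)$, which yields $\e^\a\gg\e^{1/2}\gg\e^{2\a}$, cleanly separating the two leading terms $\e^\a\b$ and $\e^{1/2}\l$ from the $O(\e^{2\a})$ remainder. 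Diagonalizing the $\e^\a$-matrix gives the band coefficients, and optimizing them over $t$ is where the second hypothesis in \eqref{2.5} enters: it is exactly the condition making $k_3^2-k_1^2>0$, so that the stationarity equation for $t$ has a finite root $t_{l/r}$ and $\b_\pm$ in \eqref{j2.12} are real. Geometrically it says the two curves \eqref{3.22a} cross with slopes of opposite sign, which is why the extremum in $\tau_1$ is attained at an \emph{interior} point; the normalized eigenvector at $t_{l/r}$ is $e_{l/r}$.

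With $e_{l/r}$ fixed, the $O(\e^{1/2})$ correction to the edge value is the expectation of the wall contribution in the state $e_{l/r}^{(1)}\psi_0^{(n_*,1)}+e_{l/r}^{(2)}\psi_0^{(m_*,2)}$; since $n\neq m$ the wall does not mix the two modes, and the correction depends on $\tau_2$ through the matching across the walls bounding $\square$. Optimizing it over $\tau_2$ produces $\l_{l/r}$ and the extremal phases $\tau_{2,l/r}$. The decisive and most delicate point, and the source of the interior-extremum phenomenon, is to compute this $\tau_2$-dependence sharply enough to show that, when the $p=2$ component dominates $e_{l/r}$, the optimal phase is the interior value $\pm\tfrac{\pi}{2a_2}$ rather than $0$ or $\pm\tfrac{\pi}{a_2}$. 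I expect this to be the main obstacle: a naive single-$\delta$ treatment of a wall places the band extrema only at $\tau_2\in\{0,\pm\tfrac{\pi}{a_2}\}$, so one must resolve the boundary-layer correction beyond that approximation and disentangle the interference of the two walls in order to see the interior optima and to obtain the uniform coefficient $\tfrac{8\pi^2}{a_2^3\langle V\rangle}$.

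Finally, the third hypothesis $\b_l<\b_r$ in \eqref{2.5}, combined with the scale separation, gives $\eta_l(\e)<\eta_r(\e)$ for small $\e$, so the lower band tops out at $\eta_l$ and the upper band bottoms out at $\eta_r$, at the quasimomenta $\tau_{l/r}(\e)$ with the stated asymptotics. To finish one must check that no remaining band function meets $(\eta_l(\e),\eta_r(\e))$, which follows from the separation of the other Dirichlet levels \eqref{3.22a} from $E_0$ together with uniform-in-$\tau$ estimates. To make everything rigorous I would complement the formal asymptotics by constructing explicit quasimodes, giving one-sided bounds through the min-max principle, and by controlling $(\Op_\e(\tau)-E)^{-1}$ on the complement of the almost-eigenspace; the latter requires estimating the singular term $\e^{-3/2}V_\e$ uniformly in $\tau$ and $\e$ so that the reduction error is genuinely $o(\e^{1/2})$, which together with the sharp $\tau_2$-analysis constitutes the technical heart of the proof.
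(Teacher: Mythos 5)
Your overall strategy is the same as the paper's: Floquet--Bloch reduction \eqref{2.6}, a boundary-layer analysis showing that the wall acts as a Dirichlet condition up to an $O(\e^{1/2})$ correction that carries the whole $\tau_2$-dependence, two-mode degenerate perturbation theory near the crossing \eqref{2.10} governed by the matrix $M^{(0)}-2tM^{(1)}$ after the rescaling $\tau_1=\tau_0+\e^\a t$, the scale separation $\e^\a\gg\e^{1/2}\gg\e^{2\a}$ coming from $\a\in(\tfrac13,\tfrac12)$, and the correct reading of the hypotheses in \eqref{2.5}: in particular you rightly observe that the sign condition on the two slopes is exactly what makes $k_3^2-k_1^2>0$ in \eqref{j2.12}, so that the extrema in the $\tau_1$-direction are attained at finite interior points. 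The justification devices differ only cosmetically (you propose a Feshbach--Schur reduction with quasimodes and min-max; the paper constructs matched asymptotic expansions and invokes the Vishik--Lyusternik lemma \cite{VL}), and your sign convention $M^{(0)}+2tM^{(1)}$ versus the paper's $M^{(0)}-2tM^{(1)}$ is immaterial.

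There is, however, a genuine gap, and it sits precisely at the step that yields the theorem's distinctive conclusions about $\l_{l/r}$ and $\tau_{2,l/r}$. You never compute the $\tau_2$-dependence of the $\e^{1/2}$ term; you declare it ``the main obstacle'' and speculate that one must refine the boundary layer \emph{beyond} the $\delta$-interaction approximation and ``disentangle the interference of the two walls.'' That is not the mechanism, and the proposed route is misdirected: there is only one wall per period (its two sides being glued by the Bloch phase $\E^{\iu\tau_2 a_2}$), and in the paper the entire $\tau_2$-analysis is nothing more than the explicit evaluation of the \emph{same-order} solvability condition you already set up, namely formula \eqref{4.19}, from which $\l_{l/r}$, the dichotomy between $\{0,\pm\tfrac{\pi}{a_2}\}$ and $\{\pm\tfrac{\pi}{2a_2}\}$, and hence $\tau_{2,l/r}$ are read off by inspection; no higher-order refinement enters. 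Without that explicit formula your argument produces neither the coefficients $\l_{l/r}$ nor any statement about the second coordinate of the extremal quasimomenta, so the proof of the theorem as stated is incomplete. One further caution for when you do carry out this computation: your ``naive'' prediction (extrema only at $\tau_2\in\{0,\pm\tfrac{\pi}{a_2}\}$) is exactly what the first line of \eqref{4.19} yields, since $|1+\E^{-\iu\tau_2 a_2}|^2=4\cos^2\tfrac{\tau_2 a_2}{2}$ and $|1-\E^{-\iu\tau_2 a_2}|^2=4\sin^2\tfrac{\tau_2 a_2}{2}$ are monotone in $\cos\tau_2 a_2$, whereas the interior values $\pm\tfrac{\pi}{2a_2}$ emerge from the second line of \eqref{4.19}, written with $\cos^2\tau_2 a_2$ and $\sin^2\tau_2 a_2$; reconciling the two lines (and tracking the factor $2$ from the derivative of the $p=2$ mode that weights the $c_m^\pm$-term) is the truly delicate point your proposal leaves open. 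Finally, note that your target accuracy ``$o(\e^{1/2})$'' for the reduction error is not sufficient for the stated remainders $O(\e^{2\a})$, because $2\a>\tfrac12$; as in the paper, one must construct enough terms of the expansion to push the error down to $O(\e^{2\a})$.
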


Let us discuss briefly  the meaning of  this theorem. It states that  once the  inequalities (\ref{2.5}) are satisfied, the band spectrum of the operator $\Op_\e$  has a small gap around the point $E_0$ for all  sufficiently small $\e$. The edges of this gap are attained by the values of the  corresponding band functions at points $\tau_l$ and $\tau_r$; let us denote the  coordinates of these points by $\tau_{l/r}^{(j)}$, $j=1,2$. The first  coordinates  $\tau_{l/r}^{(1)}$  are close to   $+\tau_0$ or $-\tau_0$ depending on which of these points is
the minimum and the maximum in (\ref{2.16}). The number $\tau_0$ is defined by the condition (\ref{2.10}) and as we see easily, varying the numbers $a_1$ and $a_2$, we can satisfy the identity (\ref{2.10}) for arbitrary prescribed $\tau_0$ and $E_0$. This means that we can open a gap around a prescribed value $E_0$ so that its edges are attained by the values of the  corresponding band functions at the  points $\tau_{l/r}$ with the first coordinate  being close to the  prescribed $\tau_0$. On the second coordinate $\tau_{l/r}^{(2)}$ we can not say much. The only information we can provide is that the second coordinate is close to one of the values $\{0,\pm \frac{\pi}{a_2}\}$ or $\{\pm\frac{\pi}{2a_2}\}$ if $|e_{l/r}^{(1)}|\ne|e_{l/r}^{(2)}|$. Thus the points $\tau_{l/r}$ are close to one of the following  points: $(\pm\tau_0,\frac{\pi}{a_2})$,  $(\pm\tau_0,-\frac{\pi}{a_2})$, $(\pm\tau_0,0)$, $(\pm\tau_0,\frac{\pi}{2a_2})$,
$(\pm\tau_0,-\frac{\pi}{2a_2})$. In this list, the first two points are located at the boundary of the Brillouin zone, the third one  is located at the middle line $(\tau_1,0)$, while  the two last points are located at the intermediate lines $(\tau_1,\pm\frac{\pi}{2a_2})$. If $|e_{l/r}^{(1)}|=|e_{l/r}^{(2)}|$, our analysis provides no information about localization of $\tau_{l/r}^{(2)}$. In our opinion, this degenerate case could hide a situation, when the second coordinate is close to some value different from those listed above.

We should also stress that if there are several triples $(m,n,\tau_0)$ obeying (\ref{2.10}) with different values $E_0$, and for each of them there exists a corresponding gap in the spectrum of $\Op_\e$ with the above described properties. In particular, this implies that choosing $a_1$ large enough, we can open {\it arbitrarily many gaps} in the spectrum of the operator $\Op_\e$.

Let us we describe briefly the main ideas in the proof of Theorem~\ref{th2.1}; this will clarify also our choice of the powers of $\e$ in the definition of the operator $\Op_\e$. At the first step we show that as $\e\to0+$, the term $\e^{-\frac{3}{2}}V_\e$ can be approximately replaced by the Dirichlet condition at the lines $x_2=p a_2$, while the term $\e^\a\cL$ can be omitted and up to a controlled error, the band functions $E_\e^{(k)}$ are approximated by $E_0^{(n,p)}$. A general picture of location of $E_0^{(n,p)}$ is shown in Figure~2. To understand then the spectral picture in the vicinity of the point $E_0$, we construct the asymptotics for $E_\e^{(k)}$  and  analyse their dependence on $(\tau_1,\tau_2)$. We find that their leading terms are $C_1(\tau_1) \e^\a +C_2(\tau_1,\tau_2)\e^{1/2}$, where $C_1$, $C_2$ are some explicitly calculated functions. The former term is the leading one thanks to our choice of $\a$  and is produced by the term $\e^\a \cL$ in the operator $\Op_\e$. Exactly this term determines the presence of the gaps. The mechanism of opening the gaps is the same as in \cite{JPA13} and is demonstrated in Figure~3. The second term, $C_2\e^{1/2}$, is generated by the term $\e^{-3/2}V_\e$ in the operator $\Op_\e$, and this is a next-to-leading term. This term reflects how the perturbed band functions depend on $\tau_2$ but since this term is smaller than $C_1\e^\a$, it can not destroy above described gap opened due to the leading term. This is also an explanation for our choice of the powers of $\e$ in the operator $\Op_\e$. Namely, the third term could be $\e^{-b} V_\e$ but it can be approximately replaced by the Dirichlet condition only once $b>1$. Then the first term it produces in the asymptotics is $Const\cdot\e^{2-b}$ and it should be smaller than $\e^\a$. To be able to make our analysis, we also need that the error term should be of order $O(\e^{2\a})$. Such situation is realized as $b=\tfrac{3}{2}$ and $\a\in(\tfrac{1}{3},\tfrac{1}{2})$. One more explanation for letting $b=\frac{3}{2}$ is that this choice simplifies some technical details in asymptotic constructions.

\begin{figure}
\centerline{\includegraphics[scale=0.325]{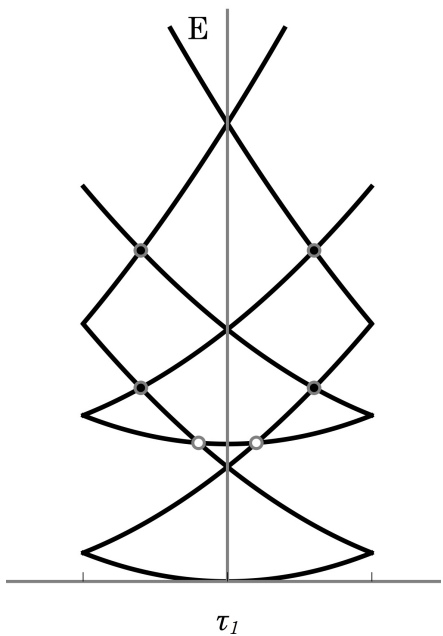}}

\caption{\small Limiting dispersion curves. The intersections of curves indicated by empty circles do not satisfy the second inequality in (\ref{2.5}), while the filled circles indicate the intersections obeying the second inequality in (\ref{2.5}).}
\end{figure}

\section{Approximation of the band functions} \label{s:approxband}

This section is devoted to the preliminary part of the proof of Theorem~\ref{th2.1}. The main idea here is to approximate the band functions $E_{\e}^{(k)}(\tau)$ by similar band functions corresponding to a simpler operator.

By $\Op_0(\tau_1)$ we denote the Laplacian in $\square$ subject to the quasiperiodic boundary conditions of the type \eqref{2.9} on the `longitudinal' boundaries of $\square$, $j=1$, while on
\begin{equation*}
\g:=\{x:\ 0<x_1<a_1,\, x_2=0\}\cup\{x:\ 0<x_1<a_1,\, x_2=a_2\}
\end{equation*}
we impose the Dirichlet condition. Its eigenvalues and the associated eigenfunctions are easily seen to be $E^{(n,p)}_0(\tau_1)$ and
\begin{equation}\label{3.22b}
\Psi^{(n,p)}_0(x,\tau_1)=
\frac{\sqrt{2}}{\sqrt{a_1 a_2}}\, \E^{\iu \left(\tau_1+\frac{2\pi n}{a_1}\right) x_1} \sin\frac{\pi p}{a_2} x_2.
\end{equation}
The main statement we are going to prove in this section is as follows.
\begin{lemma}\label{lm3.3}
Given any fixed $E$, there exists $\e_0>0$ such that for all $\e\in(0,\e_0]$ and  all $(n,p)$ obeying
\begin{equation}
E_0^{(n,p)}(\tau_1)\leqslant E \label{3.24}
\end{equation}
the estimates
\begin{align}\label{3.51}
&\big|E_\e^{(k)}(\tau)-E_0^{(n,p)}(\tau_1)
\big|\leqslant C \e^\a,
\end{align}
hold, where $E_\e^{(k)}(\tau)$ are the band functions appearing in (\ref{2.6}) and $C$ is a constant independent of $\e$, $\mu$, $n$, $p$ and $\tau$ but dependent on $E$. The eigenvalues $E_0^{(n,p)}$ in (\ref{3.51}) are assumed to be arranged in the ascending order counting the multiplicities and this 
establishes the correspondence between the superscripts $k$ and $(n,p)$ in (\ref{3.51}).
\end{lemma}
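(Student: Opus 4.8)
The plan is to prove \eqref{3.51} as a two-sided bound, estimating $E_\e^{(k)}(\tau)$ from above and from below by the ordered quantities $E_0^{(k)}(\tau_1)$ with an $\Odr(\e^\a)$ error. Throughout I would work with the quadratic form of $\Op_\e(\tau)$ on $\H^1(\square)$ (subject to the quasiperiodic conditions on the lateral sides),
\[
a_\e[u]:=\int_\square|\nabla u|^2\,dx+\e^\a(\cL u,u)_{L_2(\square)}+\e^{-\frac32}\int_\square V_\e|u|^2\,dx,
\]
and apply the min-max principle. A preliminary remark is that the constraint $E_0^{(n,p)}(\tau_1)\le E$ restricts $(n,p)$ to a finite set whose cardinality is bounded uniformly in $\tau_1\in\square^*$ by a constant depending only on $E$; this is what renders the constant $C$ in \eqref{3.51} uniform in $n$, $p$ and $\tau$.

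For the upper bound I would take the explicit functions $\Psi_0^{(n,p)}$ from \eqref{3.22b} as trial functions. They satisfy $-\D\Psi_0^{(n,p)}=E_0^{(n,p)}\Psi_0^{(n,p)}$ with the Dirichlet condition on $\g$, hence are $L_2(\square)$-orthonormal with $\int_\square\nabla\Psi_0^{(i)}\cdot\overline{\nabla\Psi_0^{(j)}}=E_0^{(j)}\d_{ij}$. The term $\e^\a(\cL\Psi_0^{(n,p)},\Psi_0^{(m,q)})$ is $\Odr(\e^\a)$ uniformly over the finite admissible index set, while $\e^{-3/2}\int_\square V_\e|\Psi_0^{(n,p)}|^2$ is only $\Odr(\e^{3/2})$: indeed $V_\e$ is supported in the $\Odr(\e)$-strips of $\square$ adjacent to $\g$, where $\Psi_0^{(n,p)}$ vanishes linearly, so the rescaling $x_2=\e y$ produces the factor $\e^{-3/2}\cdot\e\cdot\e^{2}=\e^{3/2}$. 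Consequently, on the span $L_k$ of the first $k$ trial functions the form $a_\e$ is represented by the matrix $\mathrm{diag}\big(E_0^{(j)}(\tau_1)\big)+\Odr(\e^\a)$, and the min-max principle gives $E_\e^{(k)}(\tau)\le E_0^{(k)}(\tau_1)+C\e^\a$.

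The lower bound is the heart of the argument: the tall wall $\e^{-3/2}V_\e$ must be shown to act asymptotically as a Dirichlet screen on $\g$. First, since $\cL$ is relatively form-bounded with respect to $-\D$, one has $|\e^\a(\cL u,u)|\le C\e^\a\big(\int_\square|\nabla u|^2+\|u\|_{L_2(\square)}^2\big)$. The key estimate is then a quantitative trace inequality produced by the barrier: using $V\ge c_0$ on $[-a_3,a_3]$ together with $|u(x_1,0)|^2\le 2|u(x_1,x_2)|^2+2x_2\int_0^{x_2}|\p_{x_2}u|^2$ and averaging over $x_2\in[0,a_3\e]$ (and symmetrically near $x_2=a_2$), one obtains
\[
\e^{-\frac32}\int_\square V_\e|u|^2\,dx\ge \frac{c_0a_3}{2}\,\e^{-\frac12}\int_\g|u|^2\,dx_1-C\e^{\frac12}\int_\square|\p_{x_2}u|^2\,dx.
\]
Combining the two estimates and using $\e^{1/2}\le\e^\a$ for $\a<\tfrac12$ yields $a_\e[u]\ge(1-C\e^\a)\,r_\e[u]-C\e^\a\|u\|_{L_2(\square)}^2$, where $r_\e[u]:=\int_\square|\nabla u|^2+\tfrac{c_0a_3}{2}\e^{-1/2}\int_\g|u|^2\,dx_1$. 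By monotonicity of eigenvalues under form inequalities, $E_\e^{(k)}(\tau)\ge(1-C\e^\a)\mu_\e^{(k)}(\tau_1)-C\e^\a$, where $\mu_\e^{(k)}$ are the eigenvalues of the Laplacian on $\square$ with the quasiperiodic conditions on the lateral sides and the Robin condition of large parameter $\tfrac{c_0a_3}{2}\e^{-1/2}$ on $\g$.

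The final ingredient is that $r_\e$ separates variables: its eigenfunctions are $\E^{\iu(\tau_1+2\pi n/a_1)x_1}\phi_p(x_2)$, where $\phi_p$ solves the one-dimensional Robin problem on $[0,a_2]$ with parameter $\propto\e^{-1/2}$. The elementary analysis of this 1D problem shows that its eigenvalues approach the Dirichlet values $\pi^2p^2/a_2^2$ from below at the rate $\Odr(\e^{1/2})$, whence $\mu_\e^{(k)}(\tau_1)\ge E_0^{(k)}(\tau_1)-C\e^{1/2}$ uniformly over the finite admissible range. Substituting this back gives $E_\e^{(k)}(\tau)\ge E_0^{(k)}(\tau_1)-C\e^\a$, which together with the upper bound yields \eqref{3.51}. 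The main obstacle is precisely the barrier trace inequality in the lower bound: a naive cut-off forcing $u$ to vanish on $\g$ fails, because on the $\Odr(\e)$-strip $u$ is only of size $\Odr(\e^{1/4})$ and the cut-off gradient would then contribute $\Odr(\e^{-1/2})$. The point of replacing the wall by the Robin penalty $\e^{-1/2}\|u\|_{L_2(\g)}^2$ is to convert the singular potential into a boundary term whose effect on the spectrum is both explicitly computable and of the harmless order $\e^{1/2}$.
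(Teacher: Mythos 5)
Your proposal is correct, but it takes a genuinely different route from the paper. The paper proves Lemma~\ref{lm3.3} in two steps through the intermediate operator $\Op_V(\tau)$ (the wall kept, $\cL$ dropped): first, Lemma~\ref{lm3.1} compares $E_V^{(n,p)}$ with $E_0^{(n,p)}$ at rate $\e^{1/2}$ by reducing to the one-dimensional operator $\cA_\e(\tau_2)$ via separation of variables \eqref{3.20}, proving norm-resolvent convergence to the Dirichlet operator (Subsection~\ref{ssResAe}) and then constructing matched asymptotic expansions \eqref{3.26} justified by the Vishik--Lyusternik lemma (Subsection~\ref{ssEvsAe}); second, Subsection~\ref{ssEvsHe} establishes the resolvent estimate \eqref{3.52} in $\square$, which controls the $\e^\a\cL$ perturbation uniformly in $\tau$ and yields \eqref{3.53}. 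You instead bound the band functions from both sides by pure min-max: the upper bound with the Dirichlet eigenfunctions \eqref{3.22b} as trial functions (correctly noting that their linear vanishing on $\g$ makes the wall term $\Odr(\e^{3/2})$), and the lower bound by converting the barrier into a Robin penalty $\propto\e^{-1/2}\|u\|^2_{L_2(\g)}$ via the averaged trace inequality, then using separation of variables and the classical large-Robin-parameter convergence to Dirichlet at rate $\Odr(\e^{1/2})$. Your key trace inequality is correct as stated, and the whole scheme works; the only details to polish are (i) the reduced one-dimensional problem is not the pure Robin problem but the quasiperiodic one with a boundary penalty --- though dropping the quasiperiodic constraint only lowers the min-max values, so the pure Robin bound suffices for the lower estimate --- and (ii) the index correspondence between the ordered families, which is standard since each reduced eigenvalue lies within $C\e^{1/2}$ below its Dirichlet counterpart. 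The trade-off: your argument is more elementary and self-contained (no matched asymptotics, no external resolvent lemma, no Vishik--Lyusternik justification) and fully proves the lemma as stated; the paper's heavier machinery is not wasted, however, because the asymptotic construction of Subsection~\ref{ssEvsAe} is reused in Section~\ref{proof 2.1} to extract the explicit $\e^{1/2}$-order coefficients and eigenfunction approximations that Theorem~\ref{th2.1} requires, which a variational two-sided bound cannot deliver.
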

The proof of this lemma consists of several steps; let us briefly describe them.

We begin with introducing one more operator in
$L_2(\square)$,
which we denote as $\Op_V(\tau)$. The differential expression for $\Op_V(\tau)$ is given by formula (\ref{2.7}) with $\cL=0$ and the boundary conditions are quasiperiodic ones. Since the potential $V$ depends on $x_2$ only, we can find the eigenvalues and the associated eigenfunctions of the operator $\Op_V(\tau)$ by separation of variables,
\begin{equation}\label{3.20}
\begin{aligned}
&E^{(n,p)}_V(\tau)=\left(\tau_1+\frac{2\pi n}{a_1}\right)^2 + \l_\e^{(p)}(\tau_2),
\\
& \Psi^{(n,p)}_V(x,\tau)=
\frac{1}{\sqrt{a_1}}\,\E^{\iu \left(\tau_1+\frac{2\pi n}{a_1}\right) x_1}\, \Psi_\e^{(p)}(x_2,\tau_2),
\end{aligned}
\end{equation}
where $\l_\e^{(p)}$ and $\Psi_\e^{(p)}$ are the eigenvalues and the associated eigenfunctions of the operator
\begin{equation*}
\cA_\e(\tau_2)=-\frac{d^2\ }{dx_2^2} + \e^{-\frac{3}{2}} V_\e \quad \text{in}\;\; L_2(0,a_2)
\end{equation*}
subject to the quasiperiodic boundary conditions. The eigenvalues $\l_\e^{(p)}$ are taken in the ascending order counting the multiplicities  and the associated eigenfunctions are chosen being normalized in $L_2(0,a_2)$.

\begin{figure}
\includegraphics[scale=0.308]{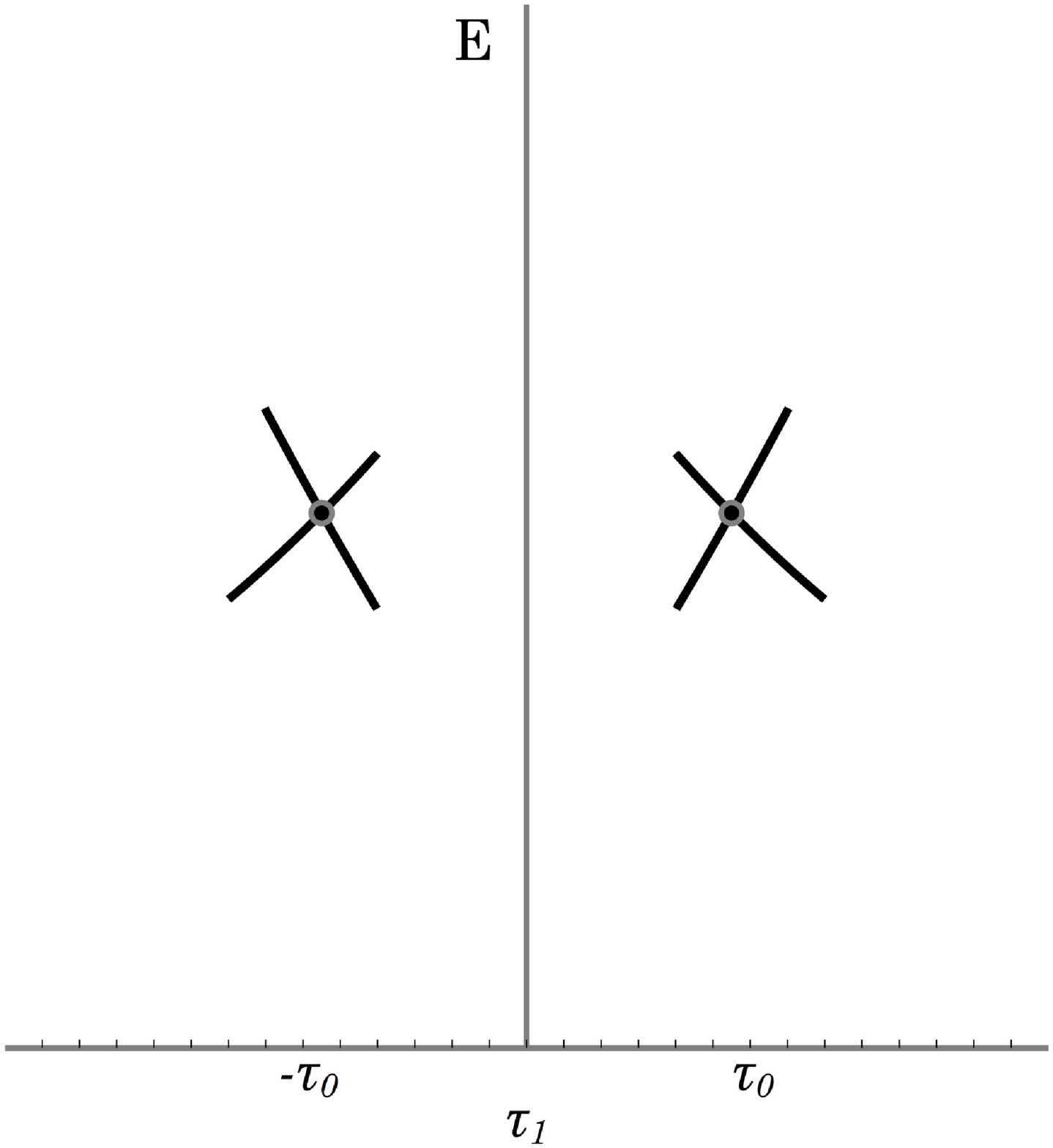}~\hspace{1 true cm}~
\includegraphics[scale=0.33]{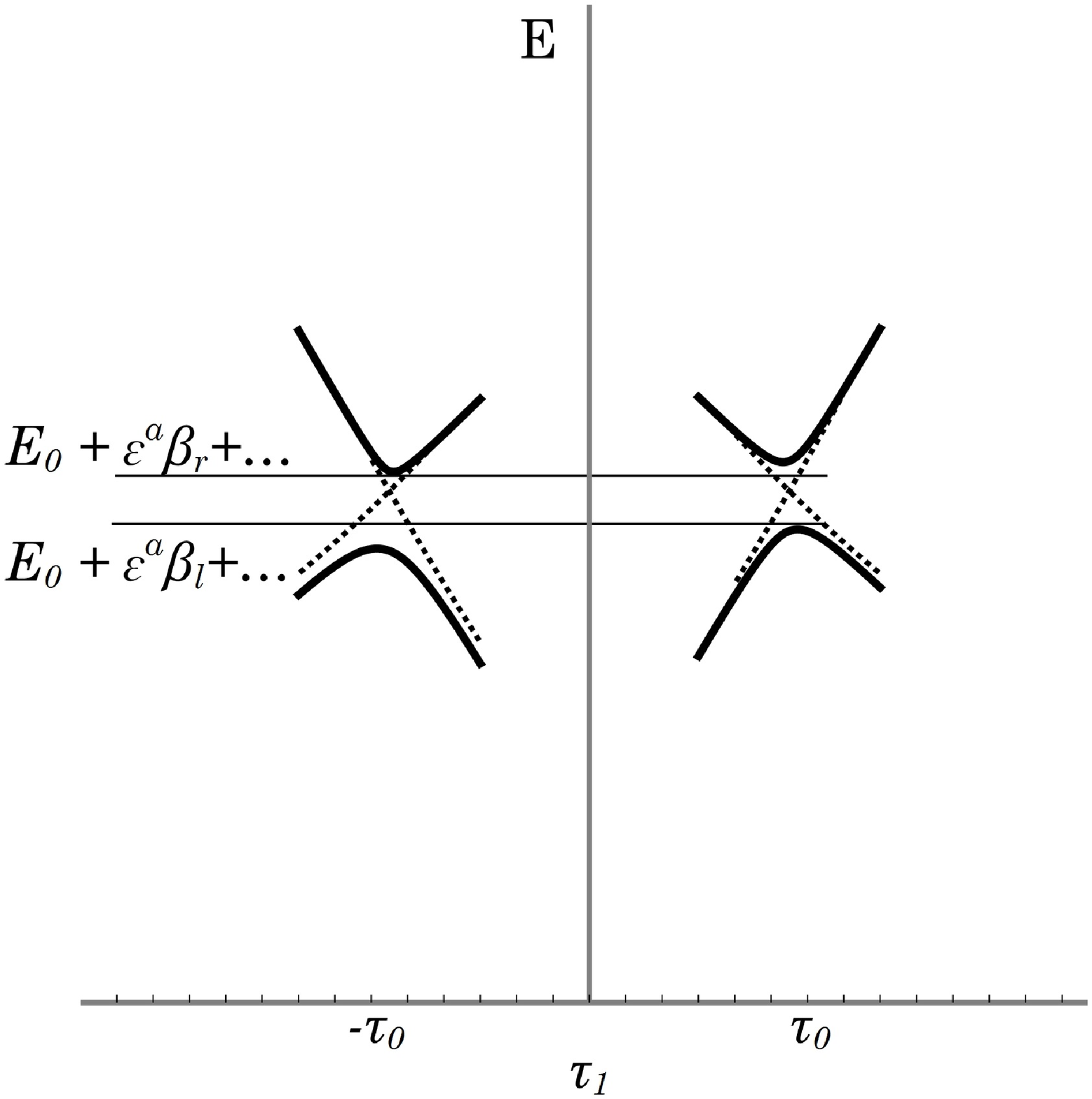}

\caption{\small Under the perturbation $\e^\a\cL$, the intersection of limiting band functions shown in the left figure creates a spectral gap shown in the right figure}
\end{figure}

The first step in the proof of Lemma~\ref{lm3.3} is to estimate the differences between the eigenvalues $E_V^{(n,p)}(\tau)$ and $E_0^{(n,p)}$. A precise formulation of the sought result is formulated in the following lemma.

\begin{lemma}\label{lm3.1}
Given any fixed $E$, there is an $\e_0>0$ such that for all $\e\leqslant \e_0$ and all all $(n,p)$ obeying (\ref{3.24}) the estimates
\begin{equation}
\label{3.25}
\big|E_V^{(n,p)}(\tau)-E_0^{(n,p)}(\tau_1)
\big|\leqslant C \e^\frac{1}{2},
\end{equation}
hold, where $C$ is a constant independent of $\e$, $\mu$, $n$, $p$ and $\tau$ but dependent on $E$.
\end{lemma}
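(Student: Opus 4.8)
The plan is to reduce \eqref{3.25} to a purely one-dimensional eigenvalue comparison and then to bound the two sides separately. By the separation of variables \eqref{3.20} one has $E_V^{(n,p)}(\tau)-E_0^{(n,p)}(\tau_1)=\l_\e^{(p)}(\tau_2)-\frac{\pi^2p^2}{a_2^2}$, which depends neither on $n$ nor on $\tau_1$; moreover the constraint \eqref{3.24} forces $\frac{\pi^2p^2}{a_2^2}\le E$, so that $p$ ranges only over a finite set $\{1,\dots,P\}$ with $P=P(E)$. It therefore suffices to prove
\begin{equation*}
\Big|\l_\e^{(p)}(\tau_2)-\tfrac{\pi^2p^2}{a_2^2}\Big|\le C\e^{1/2}
\end{equation*}
uniformly in $\tau_2$ and in $p\le P$, where $\l_\e^{(p)}(\tau_2)$ are the quasiperiodic eigenvalues of $\cA_\e(\tau_2)$ on $(0,a_2)$, with associated quadratic form $q_\e[u]:=\|u'\|^2_{L_2(0,a_2)}+\e^{-3/2}\int_0^{a_2}V_\e|u|^2$.

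For the upper bound I would use the min--max principle with the Dirichlet trial functions $\phi_j(x_2)=\sqrt{2/a_2}\,\sin\frac{\pi j}{a_2}x_2$, $j=1,\dots,p$, from \eqref{3.22b}. These vanish at $x_2=0,a_2$, hence lie in the quasiperiodic form domain, and for $u=\sum_{j\le p}c_j\phi_j$ the kinetic part of $q_\e$ is $\le\frac{\pi^2p^2}{a_2^2}\|u\|^2$. Since the walls of $V_\e$ are concentrated in $O(\e)$-neighbourhoods of $x_2=0,a_2$, where each $\phi_j$ vanishes linearly in the distance to $\{0,a_2\}$, the rescaling $x_2=\e y$ gives $\e^{-3/2}\int V_\e|u|^2\le C\e^{-3/2}\e^{3}\big(\int V(y)\,y^2\,dy\big)\|u\|^2=O(\e^{3/2})\|u\|^2$, so that $\l_\e^{(p)}(\tau_2)\le\frac{\pi^2p^2}{a_2^2}+C\e^{3/2}$, which is stronger than needed.

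The lower bound is the real obstacle, and a naive variational argument is not enough: subtracting from $u$ the linear interpolant of its boundary values only yields $|u(0)|^2+|u(a_2)|^2\le C\e^{1/2}q_\e[u]$, and feeding this into the min--max loses a half-power and gives the suboptimal rate $\frac{\pi^2p^2}{a_2^2}-C\e^{1/4}$. To recover the sharp exponent I would instead bound $\cA_\e(\tau_2)$ from below by an explicitly solvable model. By \eqref{2.2} one has the pointwise bound $V_\e\ge c_0\chi_\e$, where $\chi_\e$ is the $a_2$-periodic indicator of the cores $[-a_3\e,a_3\e]+a_2\mathds{Z}$, whence the form inequality $\cA_\e(\tau_2)\ge\cA_\e^{\mathrm{st}}(\tau_2):=-\frac{d^2}{dx_2^2}+\e^{-3/2}c_0\chi_\e$ and, by min--max, $\l_\e^{(p)}(\tau_2)\ge\l_\e^{\mathrm{st},(p)}(\tau_2)$ for every $p$ and $\tau_2$. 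The operator $\cA_\e^{\mathrm{st}}(\tau_2)$ is a periodic array of rectangular barriers of height $c_0\e^{-3/2}$ and width $2a_3\e$, so its band functions are governed by the elementary monodromy matrix $M(\l)=T_{\mathrm{free}}(\l)\,T_{\mathrm{bar}}(\l)$, and $\l_\e^{\mathrm{st},(p)}(\tau_2)$ solves $\operatorname{tr}M(\l)=2\cos(\tau_2a_2)$.

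To finish I would expand $T_{\mathrm{bar}}$. With $\kappa=\sqrt{c_0\e^{-3/2}-\l}$ one has $\kappa\cdot(2a_3\e)=2a_3\sqrt{c_0}\,\e^{1/4}\to0$, so, uniformly for $\l$ in a bounded interval,
\begin{equation*}
T_{\mathrm{bar}}(\l)=\begin{pmatrix}1+O(\e^{1/2})&O(\e)\\[2pt] g_\e\big(1+O(\e^{1/2})\big)&1+O(\e^{1/2})\end{pmatrix},\qquad g_\e:=2a_3c_0\,\e^{-1/2},
\end{equation*}
which reduces the dispersion relation to $2\cos\phi+\frac{g_\e\sin\phi}{\sqrt{\l}}=2\cos(\tau_2a_2)+O(\e^{1/2})$ with $\phi=\sqrt{\l}\,(a_2-2a_3\e)$. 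Because $g_\e\sim\e^{-1/2}$ is large, every bounded root must have $\sin\phi=O(\e^{1/2})$, i.e.\ $\phi$ within $O(\e^{1/2})$ of a multiple of $\pi$; writing $\phi=\pi p+\e^{1/2}\hat s$ and solving the leading balance gives $\hat s=\frac{\sqrt{\mu_p}\,[(-1)^p\cos(\tau_2a_2)-1]}{a_3c_0}$ with $\mu_p=\pi^2p^2/a_2^2$, a bounded quantity, so $\l_\e^{\mathrm{st},(p)}(\tau_2)=\mu_p+O(\e^{1/2})$ and in particular $\l_\e^{\mathrm{st},(p)}(\tau_2)\ge\mu_p-C\e^{1/2}$. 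Combined with the min--max upper bound this yields \eqref{3.25}. The technical heart of the argument, and the step I expect to be most delicate, is the rigorous, $\tau_2$- and $p$-uniform control of this root: one must show that the $O(\e^{-1/2})$ entry of $T_{\mathrm{bar}}$ does not contaminate the error terms, that exactly one branch of $\operatorname{tr}M(\l)=2\cos(\tau_2a_2)$ sits near each $\mu_p$ so that the labelling of $\l_\e^{\mathrm{st},(p)}$ matches that of $\mu_p$, and that all estimates are uniform over the finitely many relevant $p$ and over $\tau_2\in[-\pi/a_2,\pi/a_2]$.
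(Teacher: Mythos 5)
Your proposal is correct in substance and takes a genuinely different route from the paper. The paper makes the same reduction to the one-dimensional operator $\cA_\e(\tau_2)$, but then argues in two steps: it first proves norm-resolvent convergence of $\cA_\e(\tau_2)$ to the Dirichlet operator $\cA_0$ with an explicit rate (estimate \eqref{3.13}, obtained from integral identities and the auxiliary bounds of Lemma~\ref{lm3.2}), which settles convergence and the labelling of the finitely many relevant eigenvalues; it then constructs matched asymptotic expansions with boundary layers of width $O(\e)$ at $x_2=0$ and $x_2=a_2$ (the Ans\"atze \eqref{3.26}--\eqref{3.28}) and justifies them by Vishik--Lyusternik's lemma, obtaining $\l_\e^{(p)}(\tau_2)=\pi^2p^2/a_2^2+\Odr(\e^{1/2})$ uniformly in $\tau_2$, together with the \emph{explicit} coefficient $\l_{\frac{1}{2}}^{(p)}(\tau_2)$ of the $\e^{1/2}$ term. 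You replace both steps by elementary spectral comparisons: a min--max upper bound with sine trial functions (your $\mu_p+\Odr(\e^{3/2})$, $\mu_p:=\pi^2p^2/a_2^2$, is consistent with, and explained by, the paper's finding that $\l_{\frac{1}{2}}^{(p)}\leqslant 0$, so the deviation from $\mu_p$ is genuinely one-sided), and for the lower bound the monotonicity $V_\e\geqslant c_0\chi_\e$ plus an exact Hill-discriminant computation for the square-barrier operator. The labelling issue you flag is real but can be closed inside your own framework: writing $\l=\mu_p+\e^{1/2}s$ with $|s|\leqslant C$, the discriminant equals $(-1)^p\big(2+a_2a_3c_0\,s/\mu_p\big)+\Odr(\e^{1/2})$, and the $\Odr(\e^{1/2})$ errors also have $\Odr(\e^{1/2})$ derivatives in $s$, so the discriminant is strictly monotone on each window and $\operatorname{tr}M(\l)=2\cos(\tau_2a_2)$ has exactly one solution near each $\mu_q$; combined with your min--max upper bounds $\l_\e^{\mathrm{st},(q)}\leqslant\mu_q+C\e^{3/2}$ for all $q\leqslant p$, this forces the $p$-th eigenvalue into the window around $\mu_p$ rather than around a lower $\mu_q$, uniformly in $\tau_2$, since $\tau_2$ enters only through $2\cos(\tau_2a_2)\in[-2,2]$. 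What your route does not deliver is the explicit $\e^{1/2}$-coefficient and its $\tau_2$-dependence; that part of the paper's heavier machinery is not gratuitous, because the same expansion scheme is reused in Section~\ref{proof 2.1} (Ansatz \eqref{4.1}), where precisely this coefficient drives the location of the band-function extrema in Theorem~\ref{th2.1}.
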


\noindent At the second step, we estimate the differences between the band functions $E_\e^{(k)}(\tau)$ and $E_V^{(n,p)}$ and this will lead us to the statement of Lemma~\ref{lm3.3}.

The proof of Lemma~\ref{lm3.1} consists again of two parts, which we present below as separate Subsections~\ref{ssResAe},~\ref{ssEvsAe}. The second part of the proof of Lemma~\ref{lm3.3} will then follow in Subsection~\ref{ssEvsHe}.

\subsection{Approximation of the resolvent of $\cA_\e$}\label{ssResAe}

In this section we show that as $\e\to0+$, the potential $\e^{-\frac{{3}}{2}}V_\e$ in the operator $\cA_\e$ is replaced by the Dirichlet condition at $x_2=0$ and $x_2=a_2$. Namely,
consider the operator
\begin{equation*}
\cA_0:=-\frac{d^2\ }{dx_2^2} \quad\text{in}\quad L_2(0,a_2)
\end{equation*}
subject to Dirichlet condition at the interval endpoints. We are going to prove that 
as $\e\to0+$, $\cA_\e$ converges to $\cA_0$ in the norm resolvent sense and to estimate the corresponding convergence rate.

Given an $f\in L_2(0,a_2)$, we denote $u_\e:=\big(\cA_\e(\tau_2)-\iu\big)^{-1}f$, $u_0:=(\cA_0-\iu)^{-1}f$, and $v_\e:=u_\e-u_0$. We write the integral identities corresponding to the boundary value problems for $u_\e$ and $u_0$ choosing $v_\e$ as a test function:
\begin{align}\label{3.5}
&
(
u_\e',
v_\e')_{L_2(0,a_2)}+\e^{-\frac{3}{2}}(V_\e u_\e,v_\e)_{L_2(0,a_2)} - \iu(u_\e,v_\e)_{L_2(0,a_2)}=(f,v_\e)_{L_2(0,a_2)},
  \\
&
\begin{aligned}
(
u_0',
v_\e')_{L_2(0,a_2)}
  &-  \overline{u_\e(0)} \left(\E^{\iu\tau_2 a_2}u'_0(a_2)-u'_0(0)\right)
\\
&- \iu(u_\e,v_\e)_{L_2(0,a_2)}=(f,v_\e)_{L_2(0,a_2)}.
\end{aligned}\nonumber
\end{align}
Subtracting these relations from the each other, we get
\begin{align*}
\|
v_\e'\|^2_{L_2(0,a_2)} -&\iu\|v_\e\|_{L_2(0,a_2)}^2 +\e^{-\frac{3}{2}} (V_\e v_\e,v_\e)_{L_2(0,a_2)}
\\
=&-\e^{-\frac{3}{2}} (V_\e u_0,v_\e)_{L_2(0,a_2)} -    \overline{u_\e(0)} \left(\E^{\iu\tau_2 a_2}u'_0(a_2)-u'_0(0)\right),
\end{align*}
an consequently, by virtue of conditions (\ref{2.2}) we obtain
\begin{align}\label{3.7}
&
\|v_\e\|_{\H^1(0,a_2)}^2
\leqslant 2C \e^{-\frac{3}{2}} \|u_0\|_{L_2(S_\e)} \|v_\e\|_{L_2(S_\e)} +2|u_\e(0)| \left(\left|u'_0(0)\right| + \left|u'_0(a_2)\right| \right),
\\[.5em]
&S_\e:=\{x:\ 0<x_2<a_3\e\}\cup\{x:\ a_2-a_3\e<x_2<a_2\},\quad C:=\max\limits_{x_2} V(x_2).\nonumber
\end{align}
\noindent To proceed with estimating in (\ref{3.7}), we shall need the following auxiliary result.
\begin{lemma}
\label{lm3.2}
The functions $u_0,\: u_\e$, and $v_\e$ satisfy the inequalities
\begin{align}\label{3.8}
&|u'_0(0)| + |u'_0(a_2)|\leqslant C\|f\|_{L_2(0,a_2)},
\\
&\|u_0\|_{L_2(S_\e)}\leqslant C\e^\frac{3}{2} \|f\|_{L_2(0,a_2)},\label{3.10}
\\
&
\|v_\e\|_{L_2(S_\e)}\leqslant C\e^{\frac{3}{4}}\|f\|_{L_2(0,a_2)},\label{3.11}
\\
&|u_\e(0)|\leqslant C  \e^{ \frac{1}{4}} \|f\|_{L_2(0,a_2)}\label{3.9}
\end{align}
for all $\e$ small enough, where $C$ is a constant independent of $\e$, $\tau_2$ and $f$.
\end{lemma}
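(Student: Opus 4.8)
The plan is to derive all four bounds from a single energy estimate for $u_\e$ together with standard elliptic regularity for the Dirichlet resolvent $u_0$, deliberately avoiding any appeal to \eqref{3.7} so that no circularity arises. First I would test the weak formulation of the equation for $u_\e$ with $u_\e$ itself. The boundary terms generated by integration by parts cancel by virtue of the quasiperiodic conditions, so this yields the identity $\|u_\e'\|_{L_2(0,a_2)}^2+\e^{-\frac32}\|V_\e^{1/2}u_\e\|_{L_2(0,a_2)}^2-\iu\|u_\e\|_{L_2(0,a_2)}^2=(f,u_\e)_{L_2(0,a_2)}$. Taking the imaginary part gives $\|u_\e\|_{L_2(0,a_2)}\leqslant\|f\|_{L_2(0,a_2)}$, and the real part then furnishes the two crucial preliminary bounds $\|u_\e'\|_{L_2(0,a_2)}\leqslant\|f\|_{L_2(0,a_2)}$ and $\|V_\e^{1/2}u_\e\|_{L_2(0,a_2)}^2\leqslant\e^{\frac32}\|f\|_{L_2(0,a_2)}^2$, with all constants manifestly independent of $\e$ and $\tau_2$.

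Next I would dispose of the two bounds that involve only $u_0$. Elliptic regularity for $\cA_0-\iu$ gives $\|u_0\|_{\H^2(0,a_2)}\leqslant C\|f\|_{L_2(0,a_2)}$ (rewrite the equation as $u_0''=-f-\iu u_0$ and combine with $\|u_0\|_{L_2},\|u_0'\|_{L_2}\leqslant\|f\|_{L_2}$), and the one-dimensional embedding $\H^2(0,a_2)\hookrightarrow C^1[0,a_2]$ then yields \eqref{3.8} immediately. For \eqref{3.10} I would exploit the Dirichlet conditions $u_0(0)=u_0(a_2)=0$: since $u_0\in C^1$, on each component of $S_\e$ one has $|u_0(x_2)|\leqslant a_3\e\,\|u_0'\|_{C[0,a_2]}\leqslant C\e\|f\|_{L_2(0,a_2)}$, and integrating this pointwise bound over $S_\e$, whose measure is $2a_3\e$, supplies the extra factor $\e^{1/2}$ and hence \eqref{3.10}.

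Finally I would treat the two bounds that genuinely feel the strong potential. The decisive point is that $S_\e$ lies inside the region where $V_\e\geqslant c_0$: for $x_2\in(0,a_3\e)$ and for $x_2\in(a_2-a_3\e,a_2)$ the relevant argument of $V$ lies in $[-a_3,a_3]$, so the lower bound in \eqref{2.2} applies. Consequently $c_0\|u_\e\|_{L_2(S_\e)}^2\leqslant\|V_\e^{1/2}u_\e\|_{L_2(0,a_2)}^2\leqslant\e^{\frac32}\|f\|_{L_2(0,a_2)}^2$, giving $\|u_\e\|_{L_2(S_\e)}\leqslant C\e^{\frac34}\|f\|_{L_2(0,a_2)}$; combined with \eqref{3.10} through the triangle inequality this is exactly \eqref{3.11}. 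For \eqref{3.9} I would apply the scaled trace inequality $|w(0)|^2\leqslant C\big(\e^{-1}\|w\|_{L_2(0,a_3\e)}^2+\e\|w'\|_{L_2(0,a_3\e)}^2\big)$, obtained by rescaling the standard one-dimensional trace estimate to an interval of length $a_3\e$, with $w=u_\e$. Inserting $\|u_\e\|_{L_2(S_\e)}\leqslant C\e^{\frac34}\|f\|_{L_2(0,a_2)}$ and $\|u_\e'\|_{L_2(0,a_2)}\leqslant\|f\|_{L_2(0,a_2)}$ leaves $|u_\e(0)|^2\leqslant C(\e^{\frac12}+\e)\|f\|_{L_2(0,a_2)}^2$, which is \eqref{3.9}.

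The only genuinely delicate points are bookkeeping: one must verify that the boundary contributions really cancel under the quasiperiodic conditions, so that the energy identity holds with no spurious boundary term and all constants remain uniform in $\tau_2$, and one must track the $\e$-scaling in the trace inequality with care, since it is precisely the factor $\e^{-1}$ multiplying the $\e^{3/2}$ gain that degrades the exponent from $\tfrac34$ down to the final $\tfrac14$ in \eqref{3.9}. Everything else is routine.
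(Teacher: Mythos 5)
Your proof is correct and follows essentially the same route as the paper: the energy identity obtained by testing the equation for $u_\e$ with $u_\e$ itself (imaginary part giving $\|u_\e\|_{L_2(0,a_2)}\leqslant\|f\|_{L_2(0,a_2)}$, real part giving $\|u_\e'\|_{L_2(0,a_2)}\leqslant C\|f\|_{L_2(0,a_2)}$ and $\e^{-3/2}(V_\e u_\e,u_\e)_{L_2(0,a_2)}\leqslant C\|f\|_{L_2(0,a_2)}^2$), the observation that $V_\e\geqslant c_0$ on $S_\e$ so that \eqref{3.11} follows from the potential term plus \eqref{3.10}, elliptic regularity and the embedding into $C^1$ for \eqref{3.8}, and for \eqref{3.9} a trace-type bound on an interval of length $a_3\e$ --- your scaled trace inequality is exactly what the paper derives by hand through its integration-by-parts identity, and both give the same exponent $\tfrac14$. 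The only genuine divergence is \eqref{3.10}: the paper cites Lemma~4.1 of \cite{PRSE}, whereas you prove it directly from the Dirichlet conditions $u_0(0)=u_0(a_2)=0$ and the $C^1$ bound (a factor $\e$ pointwise, a factor $\e^{1/2}$ from the measure of $S_\e$), which makes your argument self-contained with identical constants' dependence.
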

\begin{proof}
Throughout the proof the symbol $C$ stands for various positive constants independent of $x$, $\e$, $\tau_2$, $f$, $v_\e$, $u_0$, and $u_\e$ the values of which are not essential. 
Using standard smoothness improving theorems and the embedding of $\H^1(0,a_2)$ into $C[0,a_2]$, we infer that
\begin{equation}\label{3.12}
|u'_0(0)| + |u'_0(a_2)| \leqslant  C \|u_0\|_{\H^2(0,a_2)} \leqslant C\|u_0\|_{\H^2(0,a_2)} \leqslant C\|f\|_{L_2(0,a_2)},
\end{equation}
which proves (\ref{3.8}). Estimate (\ref{3.10}) is implied by Lemma~4.1 in \cite{PRSE}.


We write the integral identity for $u_\e$ analogous to (\ref{3.5}) employing now this function as the test one,
\begin{equation*}
\|\nabla u_\e\|^2_{L_2(0,a_2)} + \e^{-\frac{3}{2}} (V_\e u_\e,u_\e)_{L_2(0,a_2)} -\iu\|u_\e\|_{L_2(0,a_2)}^2=(f,u_\e)_{L_2(0,a_2)}.
\end{equation*}
In view of condition (\ref{2.2}) this implies
\begin{equation*}
\|u_\e\|_{\H^1(0,a_2)}^2 + c_0\e^{-\frac{3}{2}}\|u_\e\|_{L_2(S_\e)}^2 \leqslant 2\|f\|_{L_2(0,a_2)}\|u_\e\|_{L_2(0,a_2)}.
\end{equation*}
Thus we find
\begin{equation}\label{3.16}
\|u_\e\|_{\H^1(0,a_2)}\leqslant 2\|f\|_{L_2(0,a_2)},\qquad \|u_\e\|_{L_2(S_\e)}\leqslant  2c_0^{-\frac{1}{2}} \e^{\frac{3}{4}} \|f\|_{L_2(0,a_2)}.
\end{equation}
The second estimate in combination with (\ref{3.10}) and the definition of $v_\e$ yields (\ref{3.11}).

It remains to prove (\ref{3.9}). To this aim,
we use integration by parts to rewrite the norm in question as follows,
\begin{equation*}
\|u_\e\|_{L_2(S_\e^+)}^2= 
 \int\limits_{0}^{a_3\e} |u_\e|^2 \di x_2
=  a_3\e|u_\e(0)|^2+2\int\limits_{0}^{a_3\e} (a_3\e-x_2) \RE \overline{u_\e}u'_\e\di x_2.
\end{equation*}
Using next 
(\ref{3.16}) 
and Cauchy-Schwarz inequality
 we arrive at the bound
\begin{align*}
a_3\e|u_\e(0)|^2 \leqslant  \|u_\e\|_{L_2(S_\e^+)}^2 + 2 a_3\e \|u_\e\|_{L_2(S_\e^+)} \|u'_\e\|_{L_2(S_\e^+)}
\leqslant C \e^\frac{3}{2}
\|f\|_{L_2(0,a_2)}^2
\end{align*}
which implies (\ref{3.9}). 
\end{proof}

The proven lemma allows us to proceed with the estimate in (\ref{3.7}) arriving thus at the inequality
$$
\|v_\e\|_{\H^1(0,a_2)}^2  \leqslant C\e^{\frac{1}{4}}  \|f\|_{L_2(0,a_2)}^2,
$$
which is in view of the definition of $v_\e$ equivalent to the estimate
\begin{equation}\label{3.13}
\left\|\big(\cA_\e(\tau_2)-\iu\big)^{-1}-
(\cA_0-\iu)^{-1}
\right\|_{L_2(0,a_2)\to \H^1(0,a_2)}\leqslant C\e^{\frac{1}{8}}.
\end{equation}
Here $\|\cdot\|_{L_2(0,a_2)\to \H^1(0,a_2)}$ denotes the norm of an operator acting from $L_2(0,a_2)$ to $\H^1(0,a_2)$ and $C$ is a constant independent of $\e$.

\subsection{Approximation of the eigenvalues of $\cA_\e$}\label{ssEvsAe}

Let us now complete the proof of Lemma~\ref{lm3.1}. The estimate (\ref{3.13}) and formula (\ref{3.20}) for $E_V^{(n,p)}$ imply immediately that the eigenvalues $E_V^{(n,p)}$ converge to $E_0^{(n,p)}$. In particular, given $E>0$, there exists $\e_0>0$ such that for all  $\e\in(0, \e_0]$  all eigenvalues $E_V^{(n,p)}(\tau)$ with  $(n,p)$ such that $E_0^{(n,p)}(\tau)$ does not obey  (\ref{3.24}) satisfy the lower bound
\begin{equation*}
E_V^{(n,p)}(\tau)>\frac{E}{2}.
\end{equation*}
Consider next the eigenvalues $E_0^{(n,p)}(\tau)$ that do obey (\ref{3.24}). There are finitely many  of them and our aim is to
prove estimate (\ref{3.25})  for each  pair $(n,p)$ for which the inequality  (\ref{3.24}) is satisfied.

The main idea  to achieve this goal  is to construct the asymptotic expansions of $\l_\e^{(p)}$ as $\e\to0+$;
we recall that $\l_\e{(p)}$ was introduced in (\ref{3.20}).
Here we employ a standard approach consisting of two steps. At the first step, we construct the asymptotic expansions formally using the method of matching asymptotic expansions \cite{Il}.  The next step consists of justifying  the formal asymptotics by estimating the error term. We fix $n$
and  adopt  the following  Ansatz  for $\l_\e^{(p)}$,
\begin{equation}\label{3.26}
\l_\e^{(p)}(\tau_2)=\l_0^{(p)} + \e^{\frac{1}{2}} \l_{\frac{1}{2}}^{(p)}(\tau_2) + \e \l_1^{(p)}(\tau_2)   + \ldots,\qquad \l_0^{(p)}:=\frac{\pi^2 p^2}{a_2^2}.
\end{equation}
The asymptotics for the associated eigenfunction $\Psi_\e^{(p)}(x_2,\tau_2)$ is constructed as a combination of
outer and inner expansions. The former reads as
\begin{align}\label{3.27}
&
\Psi_{\e,\mathrm{ex}}^{(p)}(x_2,\tau_2)= \Psi_0^{(p)}(x_2) + \e^{\frac{1}{2}} \Psi_{\frac{1}{2}}^{(p)}(x_2,\tau_2)  + \e \Psi_1^{(p)}(x_2,\tau_2)
 + \ldots,
\\
& \Psi_0^{(p)}(x_2):=\frac{\sqrt{2}}{\sqrt{a_2}} \sin\frac{\pi p x_2}{a_2},\nonumber
\end{align}
while the
inner expansion is constructed as follows,
\begin{align}\label{3.28}
&
\Psi_{\e,\mathrm{in}}^{(p)}(\xi,x_2,\tau_2)=e^{\iu\tau_2 x_2} \Big(\e^{\frac{1}{2}} \Psi_{\frac{1}{2},\mathrm{in}}^{(p)}(\xi,\tau_2) + \e\Psi_{1,\mathrm{in}}^{(p)}(\xi,\tau_2)
 +\ldots\Big),
\\
& \xi=\frac{x_2}{\e}\;\; \text{for}\;\; 0<x_2<2\e^{\frac{1}{2}} \quad \text{and} \quad \xi=\frac{x_2-a_2}{\e}\;\;\text{for}\;\; a_2-2\e^{\frac{1}{2}}<x_2<a_2. \nonumber
\end{align}
 To be explicit, the
 outer expansion is employed to approximate the eigenfunction outside small neighborhoods of the points $x_2=0$ and $x_2=a_2$, while the internal  refers to the behavior  in the vicinity of the mentioned points.
The form of  the Ansatz  (\ref{3.28}) ensures that  the quasiperiodic conditions  are satisfied.

The final approximation for the eigenfunction $\Psi_\e^{(p)}$ is obtained by matching  the two expansions as follows,
\begin{equation}\label{3.50}
\begin{aligned}
\Psi_\e^{(p)}(x_2,\tau_2)= &\Psi_{\e,\mathrm{ex}}^{(p)}(x_2,\tau_2)
\chi\big(x_2\e^{-\frac{1}{2}}\big) \chi\big((a_2-x_2)\e^{-\frac{1}{2}}\big)
 \\
 &+ \Psi_{\e,\mathrm{in}}^{(p)} \big(x_2\e^{-1},\tau_2\big) \Big(1-\chi\big(x_2\e^{-\frac{1}{2}}\big)\Big)
 \\
 &+ \Psi_{\e,\mathrm{in}}^{(p)} \big((x_2-a_2)\e^{-1},\tau_2\big) \Big(1-\chi\big((a_2-x_2)\e^{-\frac{1}{2}}\big)\Big),
\end{aligned}
\end{equation}
where $\chi=\chi(t)$ is an infinitely differentiable function that is equal to one for $t>2$ and vanishes for $t<1$. We substitute the expansions 
(\ref{3.26}), (\ref{3.33}), (\ref{3.28})
into the eigenvalue equation
\begin{equation*}
\left(-\frac{d^2\ }{dx_2^2}+\e^{-\frac{3}{2}}V_\e\right)\Psi_\e^{(p)} = \l_\e^{(p)}\Psi_\e^{(p)}
\end{equation*}
and  identify  the coefficients at the  same  powers of $\e$. This yields the equations
\begin{equation}\label{3.30}
-\frac{d^2\Psi_{\b}^{(p)}}{dx_2^2}=\l_0^{(p)}\Psi_{\b}^{(p)} +\l_{\b}^{(p)}\Psi_0^{(p)}\quad\text{in}\quad(0,a_2),\qquad \b\in\left\{\tfrac{1}{2},1 \right\},
\end{equation}
and
\begin{align}\label{3.31}
&-\frac{d^2\Psi_{\frac{1}{2},\mathrm{in}}^{(p)}}{d\xi^2}=0 \qquad\qquad\qquad\quad
\hphantom{n}\text{in}\;\:\mathds{R},
\\
&-\frac{d^2\Psi_{1,\mathrm{in}}^{(p)}}{d\xi^2}+V(\xi)\Psi_{\frac{1}{2},\mathrm{in}}^{(p)}=0 \qquad\hphantom{i}\text{in}\;\;\mathds{R}.
\label{3.32}
\end{align}
The expansions (\ref{3.27}), (\ref{3.28}) are to be matched in the intermediate zones, namely for $\e^{\frac{1}{2}}<x_2<2\e^\frac{1}{2}$ and $a_2-2\e^\frac{1}{2}<x_2<a_2-\e^\frac{1}{2}$. The asymptotic behavior of the external expansion as $x_2\to 0+$ and $x_2\to a_2-$ should coincide there   with the asymptotic behavior of the internal expansion as $\xi\to\pm\infty$. To be more precise, we match in this way the expansion $e^{-\iu\tau_2 x_2}\Psi_{\e,\mathrm{ex}}^{(p)}$ and $\Psi_{\e,\mathrm{in}}^{(p)}$. In view of the definition of $\Psi_0^{(p)}$ this yields
\begin{align}
&
\begin{aligned}
&\Psi_0^{(p)}(x_2)=\frac{\sqrt{2}\pi p}{a_2^\frac{3}{2}}x_2 +\mathcal{O}(x_2^3), && \quad x_2\to 0+,
\\
&\Psi_0^{(p)}(x_2)=\frac{(-1)^m\sqrt{2}\pi p}{a_2^\frac{3}{2}}(x_2-a_2) +\mathcal{O}\big((a_2-x_2)^3\big), && \quad x_2\to a_2-,
\end{aligned}\nonumber
\\
&
\begin{aligned}
&\E^{-\iu \tau_2 x_2}=1-\iu\tau_2 x_2 +\mathcal{O}(x_2^2), && \hphantom{_1} x_2\to 0+,
\\
&\E^{-\iu \tau_2 x_2}=e^{-\iu\tau_2 a_2}\Big(1-\iu\tau_2 (x_2-a_2) +\mathcal{O}\big((x_2-a_2)^2\big)\Big), &&\hphantom{_1} x_2\to 0+,
\end{aligned}\nonumber
\\
&
\begin{aligned}
&\Psi_\b^{(p)}(x_2)=\Psi_\b^{(p)}(0)+\mathcal{O}(x_2), && \quad x_2\to 0+,
\\
&\Psi_\b^{(p)}(x_2)=\Psi_\b^{(p)}(a_2)+\mathcal{O}(a_2-x_2), &&\quad x_2\to a_2-,
\end{aligned} \label{3.33}
\end{align}
where $\b\in\left\{\tfrac{1}{2},1 \right\}$. The matching conditions are
\begin{align}
&
\begin{aligned}
&\Psi_{\frac{1}{2},\mathrm{in}}^{(p)}(\xi)=\Psi_{\frac{1}{2}}^{(p)}(0)+o(1), && \xi\to+\infty,
\\
&\Psi_{\frac{1}{2},\mathrm{in}}^{(p)}(\xi)=\Psi_{\frac{1}{2}}^{(p)}(a_2)e^{-\iu\tau_2 a_2}+o(1), \qquad\qquad\qquad\qquad \hphantom{\xi\to}&& \xi\to-\infty,
\end{aligned}\label{3.34}
\\
&
\begin{aligned}
&\Psi_{1,\mathrm{in}}^{(p)}(\xi)=\frac{\sqrt{2}\pi p}{a_2^\frac{3}{2}}\xi +
\Psi_1^{(p)}(0)+o(1), &&\quad
\xi\to+\infty,
\\
&\Psi_{1,\mathrm{in}}^{(p)}(\xi)=\left(\frac{(-1)^m\sqrt{2}\pi p}{a_2^\frac{3}{2}}\xi +
\Psi_1^{(p)}(a_2)\right)e^{-\iu\tau_2 a_2}+o(1), &&\quad \xi\to-\infty.
\end{aligned}\label{3.35}
\end{align}
The only solution to the equation (\ref{3.31}) satisfying (\ref{3.34}) is a constant, that is,
\begin{equation}\label{3.36}
\Psi_{\frac{1}{2},\mathrm{in}}^{(p)}(\xi)\equiv K_{\frac{1}{2}},\qquad K_{\frac{1}{2}}:=\Psi_{\frac{1}{2}}^{(p)}(0)=\E^{-\iu\tau_2 a_2}\Psi_{\frac{1}{2}}^{(p)}(a_2),
\end{equation}
and the second identity is to be regarded as the solvability condition.  The general solution to the equation (\ref{3.32}) is given by the formula
\begin{equation*}
\Psi_{1,\mathrm{in}}^{(p)}(\xi)=\tilde\Psi_{1,\mathrm{in}}^{(p)}(\xi) +K_1,\quad \tilde\Psi_{1,\mathrm{in}}^{(p)}(\xi):=\frac{K_{\frac{1}{2}}}{2} \int\limits_{\mathds{R}} |\xi-z|V(z)\di z + C_1\xi.
\end{equation*}
In view of the formul{\ae}
\begin{equation}\label{3.31a}
\begin{aligned}
& \int\limits_{\mathds{R}} |\xi-z|V(z)\di z = \pm   \langle V\rangle \xi \pm \langle z V\rangle,\quad \xi\to\pm \infty,
\\
&   \langle V\rangle:=\int\limits_{\mathds{R}} V(z)\di z,\qquad
 \langle z V\rangle:=\int\limits_{\mathds{R}} z V(z)\di z,
\end{aligned}
\end{equation}
and the conditions (\ref{3.35}) we obtain
\begin{equation*}
\frac{K_{\frac{1}{2}}}{2}\langle V\rangle + C_1=\frac{\sqrt{2}\pi p}{a_2^\frac{3}{2}},
\qquad
-\frac{K_{\frac{1}{2}}}{2}\langle V\rangle + C_1=\frac{(-1)^m\sqrt{2}\pi p}{a_2^\frac{3}{2}} e^{-\iu\tau_2 a_2},
\end{equation*}
arriving thus finally at
\begin{align}\label{3.38}
K_{\frac{1}{2}}=\frac{1}{\langle V\rangle} \frac{\sqrt{2}\pi p \big(1-(-1)^m e^{-\iu\tau_2 a_2}\big)}{a_2^\frac{3}{2}},\qquad
C_1= \frac{\pi p \big(1+(-1)^m e^{-\iu\tau_2 a_2}\big)}{\sqrt{2}a_2^\frac{3}{2}}.
\end{align}
The solvability condition of the problem (\ref{3.30}), (\ref{3.36}), (\ref{3.38}) for $\Psi_{\frac{1}{2}}^{(p)}$ is obtained in the standard way: the equation (\ref{3.30}) should be multiplied by $\Psi_0^{(p)}$ and integrated by parts twice over $(0,a_2)$. This gives an expression for $\l_{\frac{1}{2}}^{(p)}$,
\begin{equation*}
\l_{\frac{1}{2}}^{(p)}(\tau_2)=-  \frac{2\pi^2 p^2 \big|1-(-1)^m e^{-\iu\tau_2 a_2}
\big|^2}{a_2^3\langle V\rangle}.
\end{equation*}
The corresponding solution to problem (\ref{3.30}), (\ref{3.36}), (\ref{3.38}) for $\Psi_{\frac{1}{2}}^{(p)}$ reads as
\begin{equation*}
\Psi_{\frac{1}{2}}^{(p)}(x_2,\tau_2)= 
\frac{\l_{\frac{1}{2}}^{(p)}(\tau_2)}{\sqrt{2a_2}\l_0^{(p)}}  \left(\frac{\pi p}{a_2}x_2\cos\frac{\pi p}{a_2}x_2+\frac{1}{2}\sin\frac{\pi p}{a_2}x_2\right)
+ K_{\frac{1}{2}}\cos\frac{\pi p}{a_2} x_2.
\end{equation*}
This solution is orthogonal to $\Psi_0^{(p)}$ in $L_2(0,a_2)$ as a consequence of  the assumed normalization of the perturbed eigenfunction.

To justify the obtained asymptotics 
(\ref{3.26}), (\ref{3.27}), (\ref{3.28}), (\ref{3.50}),
 the standard argument can be used. Namely, in the same way as above we construct sufficiently many terms in the expansions so that the truncated series of (\ref{3.26}), (\ref{3.50}) solve the eigenvalue equation up to an error of order $O(\e^\frac{1}{2})$. Then we apply Vishik-Lyusternik's lemma, see, for instance, \cite[Sect.~III.1.1, Lemma~1.1]{OIS} or \cite[Sect.~9, Lemma~13]{VL}, which this gives the sought asymptotics,
\begin{equation*}
\l_\e^{(p)}(\tau_2)=\l_0^{(p)} 
+\mathcal{O}(\e^\frac{1}{2}),
\end{equation*}
where the error term is uniform in $\tau_2$. The obtained expansions together with the formul{\ae} (\ref{3.20}), (\ref{3.22a}), (\ref{3.22b}) complete the proof of Lemma~\ref{lm3.1}.

\subsection{Approximation of band functions $E_\e^{(k)}$}\label{ssEvsHe}

 The goal of  this subsection
  is to complete the proof of Lemma~\ref{lm3.3}. The key point will be the following estimate,
\begin{equation}\label{3.52}
\|(\Op_\e(\tau)-\iu)^{-1}-(\Op_V(\tau)-\iu)^{-1}\|_{L_2(\square)\to \H^1(\square)}\leqslant C\e^\a,
\end{equation}
where $C$ is a constant independent of $\e$ and $\tau$. Once this inequality  is  established, it infers  a statement similar to Lemma~\ref{lm3.3}. Specifically, the estimate (\ref{3.52}) yields that to any fixed $E$,
there exists an $\e_0>0$ such that for all $\e\leqslant \e_0$ and all  $(n,p)$  obeying (\ref{3.24}) the estimates
\begin{equation}\label{3.53}
 \big|E_\e^{(k)}(\tau)-E_V^{(n,p)}(\tau) \big|\leqslant C \e^\a
\end{equation}
hold, where $C$ is a constant independent of $\e$, $\mu$, $n$, $p$, and $\tau$ but dependent on $E$, where the eigenvalues $E_0^{(n,p)}$ are assumed to be  arranged  in the ascending order counting the multiplicities. Estimate (\ref{3.53}) and Lemma~\ref{lm3.1} prove Lemma~\ref{lm3.3}. The rest of this subsection is thus  devoted to proving the inequality~(\ref{3.52}).

Given an $f\in L_2(\square)$, we denote $w_\e:=(\Op_\e-\iu)^{-1}f$, $w_V:=(\Op_V-\iu)^{-1}f$, and $w:=w_\e-w_V$. We write the integral identities for $w_\e$ and $w_V$ employing $w$ as the test function,
\begin{align*}
&(\nabla w_V,\nabla w)_{L_2(\square)} + \e^{-\frac{3}{2}} (V_\e w_V,w)_{L_2(\square)}-\iu(w_V,w)_{L_2(\square)}=(f,w)_{L_2(\square)},
\\
&(\nabla w_\e,\nabla w)_{L_2(\square)}
 + \e^{-\frac{3}{2}} (V_\e w_\e,w)_{L_2(\square)}+\e^\a\hf(w_\e,w) -\iu(w_\e,w)_{L_2(\square)}=(f,w)_{L_2(\square)},
\\
&\hf(w_\e,w):=\left(A_{11} \frac{\p w_\e}{\p x_j},\frac{\p w}{\p x_i}\right)_{L_2(\square)} + \iu
\left(A_1
\frac{\p w_\e}{\p x_1
},w\right)_{L_2(\square)}
\\
&\hphantom{\hf(w_\e,w):=}+\iu \left(w_\e, A_1\frac{\p w}{\p x_j} \right)_{L_2(\square)}+(A_0 w_\e,w)_{L_2(\square)}.
\end{align*}
 Subtracting  the first two identities from  each other  and employing the representation $\hf(w_\e,w)=\hf(w,w) +\hf(w_V,w)$, we obtain
\begin{equation}\label{3.54}
\|\nabla w\|_{L_2(\square)}^2
 + \e^{-\frac{3}{2}} (V_\e w,w)_{L_2(\square)}+\e^\a\hf(w,w) -\iu\|w\|_{L_2(\square)}^2=-\e^\a \hf(w_V,w).
\end{equation}
 In view of  the  assumed positivity of the function $V_\e$, a similar identity for $w_V$ with the test function $w_V$,
\begin{equation*}
\|\nabla w_V\|_{L_2(\square)}^2
 + \e^{-\frac{3}{2}} (V_\e w_V,w_V)_{L_2(\square)} -\iu\|w\|_{L_2(\square)}^2=(f,w_V)_{L_2(\square)},
\end{equation*}
implies the \textit{apriori} estimate
\begin{equation*}
\|w_V\|_{\H^1(\square)}\leqslant C\|f\|_{L_2(\square)}\,;
\end{equation*}
the symbol $C$ stands  again  for various inessential constants independent of $\e$, $\tau$, and $f$.  This allows us to  estimate the right hand in (\ref{3.54}) as
\begin{equation}\label{3.56}
\e^\a |\hf(w_V,w)|\leqslant C\e^\a \|w_V\|_{\H^1(\square)} \|w\|_{\H^1(\square)}.
\end{equation}
The real part of left-hand side in (\ref{3.54}) can be estimated from below as
\begin{align*}
\|\nabla w\|_{L_2(\square)}^2
 + \e^{-\frac{3}{2}} (V_\e w,w)_{L_2(\square)}+\e^\a\hf(w,w) \geqslant &\|\nabla w\|_{L_2(\square)}^2
 +\e^\a\hf(w,w)
 \\
  \geqslant &\frac{1}{2}  \|\nabla w\|_{L_2(\square)}^2
 -C\e^\a\|w\|_{L_2(\square)}^2.
\end{align*}
Using this result, (\ref{3.56}), and taking the real and the imaginary part of (\ref{3.54}), we get
\begin{align*}
&
\|w\|_{L_2(\square)}^2\leqslant C\e^\a \|f\|_{L_2(\square)} \|w\|_{\H^1(\square)},
\\
&\frac{1}{2}\|\nabla w\|_{L_2(\square)}^2 -C\e^\a \|w\|_{L_2(\square)}^2\leqslant   C\e^\a \|f\|_{L_2(\square)} \|w\|_{\H^1(\square)},
\end{align*}
and consequently,
\begin{equation*}
\|w\|_{\H^1(\square)}\leqslant C\e^\a\|f\|_{L_2(\square)}
\end{equation*}
which finally proves (\ref{3.52}).

\section{Proof of Theorem~\ref{th2.1}} \label{proof 2.1}

Now we are in position to prove our main result, Theorem~\ref{th2.1}. In view of Lemma~\ref{lm3.3}, the band functions $E_\e^{(k)}(\tau)$ converge to the eigenvalues $E_0^{(n,p)}$ as $\e\to 0+$. Considering a small fixed neighbourhood of the point $E_0$, we immediately conclude from Lemma~\ref{lm3.3} that there are constants $C_1>0$, $C_2>0$ such that as $|\tau_1-\tau_0|<C_1\e^\a$, the segment $[E_0-C_2\e^\a,E_0+C_2\e^\a]$ contains no eigenvalues of the operator $\Op_\e(\tau)$ except exactly one pair of them converging to $E_0^{(n,1)}(\tau)$ and $E_0^{(m,2)}(\tau)$, respectively, as $\e\to 0+$.
Let us analyze the behavior of these two eigenvalues.

We first rewrite the eigenvalue equation by changing the eigenfunction $\psi\mapsto \E^{\iu (\tau_1 x_1+\tau_2 x_2)}\psi$. This leads to a new equation
\begin{equation}\label{4.0}
\tilde{\Op}_\e(\tau)\psi=E\psi,
\end{equation}
where $\tilde{\Op}_\e$ is a self-adjoint operator in $L_2(\square)$ with the differential expression
\begin{align*}
\tilde{\Op}_\e(\tau)=&\sum\limits_{j=1}^{2} \left(\iu \frac{\p\ }{\p x_j}-\tau_j\right)^2
-\e^\a\left(\iu \frac{\p\ }{\p x_1}-\tau_1\right) A_{11}(x)\left(\iu \frac{\p\ }{\p x_1}-\tau_1\right)
 \\
&+  \e^\a
\left(A_1(x)\frac{\p\ }{\p x_1}-\tau_1\right)+\left(\iu\frac{\p\ }{\p x_1}-\tau_1\right) A_1(x)+\e^\a A_0(x)+ \e^{-\frac{3}{2}}V_\e(x)
\end{align*}
subject to periodic boundary conditions on the boundary $\p\square$.

Next we introduce an auxiliary parameter $t$ setting $\tau_1=t_0+\e^\a t$ and we denote the perturbed eigenvalues in question by $E_\e^\pm(t)$. The parameter $t$ ranges over some segment $[-C,C]$ for a sufficiently large $C$. The associated eigenfunctions of the operator $\tilde{\Op}_\e$ are denoted by $\Psi_\e^\pm(x,t)$. We are going to construct the first terms in the asymptotic expansions of $E_\e^\pm(t)$. This will be done using the same scheme as in Subsection~\ref{ssEvsAe} taking into consideration the presence of the perturbation $\e^\a \cL$ in the operator.

We adopt the following Ans\"atze for the eigenvalues $E_\e^\pm(t,\tau_2)$,
\begin{equation}\label{4.1}
E_\e^{\pm}(t,\tau_2)=E_0 + \e^\a \l_\a^\pm(t) + \e^\frac{1}{2}\l_1^\pm(t,\tau_2) + \ldots
\end{equation}
The asymptotics for the associated eigenfunctions are again constructed as a combination of the
outer and inner
expansions. The former is introduced as
\begin{equation}\label{4.2}
\Psi_{\e,\mathrm{ex}}^\pm(x,t,\tau_2)=
\Psi_0^\pm(x,t,\tau_2)+\e^\a \Psi_\a^\pm(x,t,\tau_2) +\e^\frac{1}{2}\Psi_\frac{1}{2}^\pm(x,t,\tau_2)+\ldots,
\end{equation}
where
\begin{equation*}
\Psi_0^\pm(x,t,\tau_2):=\frac{\sqrt{2}}{\sqrt{a_1a_2}}
\left(c_n^\pm(t,\tau_2) \psi_0^{(n,1)}(x,\tau_2)
+ c_m^\pm(t,\tau_2) 
\psi_0^{(m,2)}(x,\tau_2)\right),
\end{equation*}
and $c_j^\pm(t,\tau_2)$ are constants to be determined. The  inner expansion is of the form
\begin{equation}\label{4.3}
\Psi_{\e,\mathrm{in}}^\pm(x,t,\tau_2)=
\e^\frac{1}{2}\Psi_{\frac{1}{2},\mathrm{in}}^\pm(\xi,x_1,t,\tau_2)
+ \e \Psi_{1,\mathrm{in}}^\pm(\xi,x_1,t,\tau_2) + \ldots,
\end{equation}
where the variable $\xi$ is   the same  as in (\ref{3.28}). The approximation for the eigenfunctions is defined via the external and inner expansion matching as in (\ref{3.50}):
\begin{align*}
\Psi_\e^\pm(x,t,\tau_2)= &\Psi_{\e,\mathrm{ex}}^\pm(x,t,\tau_2)
\chi\big(x_2\e^{-\frac{1}{2}}\big) \chi\big((a_2-x_2)\e^{-\frac{1}{2}}\big)
 \\
 &+ \Psi_{\e,\mathrm{in}}^\pm \big(x_2\e^{-1},x_1,t,\tau_2\big) \Big(1-\chi\big(x_2\e^{-\frac{1}{2}}\big)\Big)
 \\
 &+ \Psi_{\e,\mathrm{in}}^\pm \big((x_2-a_2)\e^{-1},x_1,t,\tau_2\big) \Big(1-\chi\big((a_2-x_2)\e^{-\frac{1}{2}}\big)\Big).
\end{align*}
We substitute the Ans\"atze (\ref{4.1}), (\ref{4.2}), (\ref{4.3}) into the equation (\ref{4.0}) and collect the coefficients at the same powers of $\e$. This gives the equations
\begin{align}\label{4.6}
&\tilde{\Op}_0\Psi_\a^\pm-E_0\Psi_\a^\pm=2t \left(\iu\frac{\p\ }{\p x_1}-\tau_0\right) \Psi_0^\pm -\cL(\tau_0)\Psi_0^\pm + \l_\a^\pm \Psi_0^\pm,
\\
&\tilde{\Op}_0\Psi_\frac{1}{2}^\pm-E_0\Psi_\frac{1}{2}^\pm= \l_\frac{1}{2}^\pm \Psi_0^\pm,\nonumber 
\end{align}
both in $\square$, where the differential expression
\begin{equation*}
\tilde{\Op}_0:=\left(\iu\frac{\p\ }{\p x_1}-\tau_0\right)^2 + \left(\iu\frac{\p\ }{\p x_2}-\tau_2\right)^2
\end{equation*}
has to be amended with boundary conditions. On the lateral boundaries we postulate the periodic ones,
\begin{equation}\label{4.8}
\Psi_\b^\pm\big|_{x_1=0}=\Psi_\b^\pm\big|_{x_1=a_1},\qquad \frac{\p\Psi_\b^\pm}{\p x_1}\bigg|_{x_1=0}= \frac{\p\Psi_\b^\pm}{\p x_1}\bigg|_{x_1=a_1},
\end{equation}
recall that $\b\in\left\{\tfrac{1}{2},1 \right\}$. In contrast, Dirichlet boundary condition
is  assumed for $\Psi_\a^\pm$,
\begin{equation}\label{4.9}
\Psi_\a^\pm=0\quad\text{on}\;\;\g.
\end{equation}
The boundary condition for $\Psi_\frac{1}{2}^\pm$ on $\g$ will be determined later, by matching with the inner expansion.

\begin{remark}\label{rm4.1}
The homogeneous Dirichlet condition for $\Psi_\a^\pm$ have been postulated from the following reason. We could have assumed that this function has some unknown values on $\g$ to be determined by the matching with the inner expansion as it was done in Subsection~\ref{ssEvsAe}. Then one  would have to introduce an additional term $\e^{\frac{1}{2}+\a}\Psi_{\frac{1}{2}+\a}^{(m)}$ in the external expansion and terms $\e^\a\Psi_{\a,\mathrm{in}}^\pm + \e^{\frac{1}{2}+\a}\Psi_{\frac{1}{2}+\a,\mathrm{in}}^\pm$ in the inner expansion. Matching of $\Psi_\a^\pm$ with these extra terms then implies that this function  has to  vanish on $\g$ and all these extra terms are zero.  With this fact in mind we adopt from the beginning the homogeneous Dirichlet condition on $\g$ for $\Psi_\a^\pm$  obtaining in this way a simpler Ansatz  for the perturbed eigenfunctions.
\end{remark}

In view of the condition (\ref{4.9}), the solvability criterion  for the problem (\ref{4.6}), (\ref{4.8}), (\ref{4.9}) reduces to  the orthogonality of the right-hand in the equation to the functions $\psi_0^{(n,1)}$ and $\psi_0^{(m,2)}$ in $L_2(\square)$. These requirements  can be written as the system of linear equations,
\begin{align}
& M(t) C^\pm(t)=\l_\a^\pm(t) C^\pm(t), && M(t):=M^{(0)}(\tau_0)-2t M^{(1)}(\tau_0),\nonumber
\\
&
 C^\pm(t):=
\begin{pmatrix}
c_n^\pm(t)
\\
c_m^\pm(t)
\end{pmatrix}. \nonumber
\end{align}
Hence,
 $\l_\a^\pm$ are the eigenvalues of the matrix $M$ and $C^\pm$ are the associated eigenfunctions. Since the matrix $M$ is Hermitian, we  can choose the vectors $C^\pm$  to be  orthonormal in $\mathds{C}^2$. It is straightforward to find the eigenvalues $\l_\a^\pm$ explicitly,
\begin{align*}
\l_\a^\pm(t):=& \frac{M_{11}^{(0)}(\tau_0) + M_{22}^{(0)}(\tau_0)}{2}  -2t \left(\frac{\pi (n+m)}{a_1}+\tau_0
\right)
\\
&\pm \Bigg( \left( \frac{M_{11}^{(0)}(\tau_0) - M_{22}^{(0)}(\tau_0)}{2}  -t  \frac{\pi (n-m)}{a_1}
\right)^2
+ \big|  M_{12}^{(0)}(\tau_0)
\big|^2
\Bigg)^\frac{1}{2}.
\end{align*}

Let us next  determine the boundary conditions  to be imposed on $\Psi_\frac{1}{2}^\pm$ on  the boundary  $\g$. We substitute the expansions (\ref{4.3}) and (\ref{4.1}) into the eigenvalue equation (\ref{4.1}), pass to the variable $\xi$, and collect the coefficients at the  same  powers of $\e$. This leads us to the equations for the coefficients of the inner expansion,
\begin{align}\label{4.12}
&-\frac{d^2\Psi_{\frac{1}{2},\mathrm{in}}^\pm}{d\xi}=0  \hphantom{+V\Psi_{\frac{1}{2},\mathrm{in}}^\pm}\quad \text{in}\;\;\mathds{R},
\\
&-\frac{d^2\Psi_{1,\mathrm{in}}^\pm}{d\xi}+V\Psi_{\frac{1}{2},\mathrm{in}}^\pm=0 \quad \text{in}\;\;\mathds{R}.\label{4.13}
\end{align}
We expand the functions $\Psi_0^\pm$, $\Psi_\a^\pm$ and $\Psi_\frac{1}{2}^\pm$ as $x_2\to 0+$ and $x_2\to a_2-$,
\begin{align*}
& \Psi_0^\pm(x,t)=\phi_{0,+}^\pm(x_1,t)x_2+\mathcal{O}(x_2^3),\qquad x_2\to 0+,
\\
& \Psi_0^\pm(x,t)=\phi_{0,-}^\pm(x_1,t)(x_2-a_2)+\mathcal{O}\big((x_2-a_2)^3\big),\qquad x_2\to a_2-,
\\
&\phi_{0,+}^\pm(x_1,t):=\frac{\sqrt{2}\pi}{a_1^\frac{1}{2}a_2^\frac{3}{2}} \Big(c_n^\pm(t) e^{\iu\frac{2\pi n}{a_1}x_1} + 2 c_m^\pm(t)\,\E^{\iu\frac{2\pi m}{a_1}x_1}\Big),
\\
&\phi_{0,-}^\pm(x_1,t):=\frac{\sqrt{2}\pi}{a_1^\frac{1}{2}a_2^\frac{3}{2}} \Big(-c_n^\pm(t) e^{\iu\frac{2\pi n}{a_1}x_1} + 2 c_m^\pm(t)\, \E^{\iu\frac{2\pi m}{a_1}x_1}\Big)\,\E^{-\iu\tau_2 a_2},
\\
&\Psi_\a^\pm(x,t,\tau_2)=\mathcal{O}(x_2),\quad x_2\to 0+,
\\
&\Psi_\a^\pm(x,t,\tau_2)=\mathcal{O}(x_2-a_2),\quad x_2\to a_2-,
\\
&\Psi_\b^\pm(x,t,\tau_2)=\Psi_\b^\pm(x_1,0,t,\tau_2)+\mathcal{O}(x_2),\quad x_2\to 0+,
\\
&\Psi_\b^\pm(x,t,\tau_2)=\Psi_\b^\pm(x_1,a_2,t,\tau_2)+\mathcal{O}(x_2-a_2),\quad x_2\to a_2-,\quad \b\in\{\tfrac{1}{2},1\}.
\end{align*}
Rewriting these formulae in the variable $\xi$, by matching condition we conclude that the coefficients of the inner expansion should behave at infinity as follows,
\begin{align}\label{4.14}
&
\begin{aligned}
&
\Psi_{\frac{1}{2},\mathrm{in}}^\pm(\xi,x_1,t,\tau_2)=\Psi_{\frac{1}{2}}^\pm(x_1,0,t,\tau_2) + o(1),\qquad\hphantom{_2} \xi\to+\infty,
\\
&\Psi_{\frac{1}{2},\mathrm{in}}^\pm(\xi,x_1,t,\tau_2)=\Psi_{\frac{1}{2}}^\pm(x_1,a_2,t,\tau_2) + o(1),\qquad \xi\to-\infty,
\end{aligned}
\\
&
\begin{aligned}
&
\Psi_{1,\mathrm{in}}^\pm(\xi,x_1,t,\tau_2)=\phi_{0,+}^\pm(x_1,t)\xi + o(|\xi|),
 \hphantom{(1),\xi,}\quad \xi\to+\infty,
\\
&\Psi_{1,\mathrm{in}}^\pm(\xi,x_1,t,\tau_2)=\phi_{0,-}^\pm(x_1,t)\xi + o(|\xi|), \hphantom{(1),\xi,}\quad \xi\to-\infty.
\end{aligned}\label{4.15}
\end{align}
The problem (\ref{4.12}), (\ref{4.14}) is solvable if and only if
\begin{equation}\label{4.16}
\Psi_{\frac{1}{2}}^\pm(x_1,0,t,\tau_2) = \Psi_{\frac{1}{2}}^\pm(x_1,a_2,t,\tau_2) =:\phi_\frac{1}{2}^\pm(x_1,t,\tau_2)
\end{equation}
and its solution reads as
\begin{equation*}
\Psi_\frac{1}{2}(\xi,x_1,t,\tau_2)=\phi_\frac{1}{2}^\pm(x_1,t,\tau_2).
\end{equation*}
 Next we  proceed to equation (\ref{4.13})  writing its solution as
\begin{equation*}
\Psi_{1,\mathrm{in}}^\pm(\xi,x_1,t,\tau_2)=\frac{\phi_{\frac{1}{2}}(x_1,t,\tau_2)}{2} \int\limits_{\mathds{R}} |\xi-z| V(z)\di z + T_1(x_1,t,\tau) \xi + T_0(x_1,t,\tau_2),
\end{equation*}
where $T_0$, $T_1$ are functions  independent of $\xi$. The behavior of the function $\Psi_{1,\mathrm{in}}^\pm$ at infinity can be expressed using formul{\ae} (\ref{3.31a}); comparing the result with (\ref{4.15}), we get
\begin{equation*}
  \frac{\phi_{\frac{1}{2}}^\pm}{2}\langle V\rangle + T_1=\phi_{0,+}^\pm,
\qquad
-\frac{\phi_{\frac{1}{2}}^\pm}{2}\langle V\rangle + T_1=\phi_{0,-}^\pm,
\end{equation*}
which determines $\phi_\frac{1}{2}^\pm$,
\begin{equation}\label{4.18}
\phi_\frac{1}{2}^\pm(x_1,t,\tau_2)= \frac{\phi_{0,+}^\pm - \phi_{0,-}^\pm}{\langle V\rangle}.
\end{equation}
The solvability of the problem  determined by thr boundary conditions (\ref{4.8}), (\ref{4.9}), (\ref{4.16}), (\ref{4.18}) is obtained in the standard way, that is, the equation (\ref{4.8}) is multiplied by $\psi_0^{(n,1)}$ and $\psi_0^{(m,2)}$ and integrated twice by parts over $\square$ taking  the indicated  conditions into account. This yields $\l_\frac{1}{2}^\pm$  in the form of the following expression
\begin{equation}\label{4.19}
\begin{aligned}
\l_\frac{1}{2}^\pm(t,\tau_2)=&-\frac{2\pi^2}{a_1 a_2^3 \langle V\rangle} \int\limits_{0}^{a_1} \Big| c_n^\pm(t) e^{\iu\frac{2\pi n}{a_1}x_1} (1+\E^{-\iu\tau_2 a_2}) + c_m^\pm(t)\,\E^{\iu\frac{2\pi m}{a_1}x_1} (1-\E^{-\iu\tau_2 a_2})
\Big|^2 dx_1
\\
=& -\frac{8\pi^2}{ a_2^3 \langle V\rangle} \Big(\big(c_n^\pm(t)\big)^2\cos^2\tau_2 a_2 + \big(c_m^\pm(t)\big)^2\sin^2\tau_2 a_2 \Big).
\end{aligned}
\end{equation}
The justification of the asymptotics (\ref{4.1}) can be done in the same way is in Subsection~\ref{ssEvsAe}: we need to construct sufficiently many terms in  the expansion  to get an error of order $\mathcal{O}(\e^{2\a})$ and then we can apply the Vishik-Lyusternik's lemma. Finally, this allows us to conclude that the asymptotic expansions  of the eigenvalues $E_\e^{\pm}(t,\tau_2)$ are
\begin{equation*}
E_\e^{\pm}(t,\tau_2)=E_0 + \e^\a \l_\a^\pm(t) + \e^\frac{1}{2}\l_\frac{1}{2}^\pm(t,\tau_2) + O(\e^{2\a})
\end{equation*}
uniformly in $t$ and $\tau_2$. In order to complete the proof of Theorem~\ref{th2.1}, it is sufficient to calculate the extrema of the leading terms in the above asymptotics and compare them mutually.

Let us inspect the behavior of the said eigenvalues with respect to $t$ and $\tau_2$. By straightforward calculations one can check that the extrema of the functions $\l_\a^\pm$ are given by the formul{\ae}
\begin{align*}
&\min\limits_{\mathds{R}} \l_\a^+(t)=\l_\a^+(t_+),\qquad \max\limits_{\mathds{R}} \l_\a^-(t)=\l_\a^-(t_-),\qquad
\l_\a^\pm(t_\pm)=\b_\pm(\tau_0),\nonumber
\\
&t_\pm=\mp\frac{k_1(\tau_0) \big|M_{12}^{(0)}(\tau_0)\big|}{\big|k_3(\tau_0)\big|\sqrt{
k_3^2(\tau_0)
-k_1^2
(\tau_0) }}-\frac{k_4(\tau_0) }{k_3(\tau_0)},
\end{align*}
where $\b_\pm$ are the functions from (\ref{j2.12}). It follows immediately from the formula (\ref{4.19}) that for each $t$, the extrema of $\l_{\frac{1}{2}}^\pm(t,\tau_2)$ are attained at the points $\tau_2=-\tfrac{\pi}{a_2}$,  $\tau_2=0$, $\tau_2=\tfrac{\pi}{a_2}$ if $|c_n^\pm(t)|>|c_m^\pm(t)|$ and they are attained at the points $\tau_2=\pm\frac{\pi}{2a_2}$ if  $|c_n^\pm(t)|<|c_m^\pm(t)|$. If $|c_n^\pm(t)|=|c_m^\pm(t)|$, the function $\l_{\frac{1}{2}}^\pm(t,\tau_2)$ is independent of $\tau_2$. Finally, we  have
\begin{align*}
&\min\limits_{\tau_2} \l_{\frac{1}{2}}^+(t,\tau_2) = -\frac{8\pi^2}{ a_2^3 \langle V\rangle} \max\big\{(c_n^+(t)\big)^2, \big(c_m^+(t)\big)^2\big\},
\\
&\max\limits_{\tau_2} \l_{\frac{1}{2}}^-(t,\tau_2) = -\frac{8\pi^2}{ a_2^3 \langle V\rangle} \min\big\{(c_n^-(t)\big)^2, \big(c_m^-(t)\big)^2\big\}.
\end{align*}

Comparing now the minimum of $E_\e^+$ and the maximum of $E_\e^-$, we see that under the assumptions of Theorem~\ref{th2.1}, there is a gap $\big(\eta_l(\e), \eta_r(\e)\big)$ in the spectrum of the operator $\Op_\e$ with the properties described in the statement of this theorem.

\subsection*{Acknowledgment}

The reported study by D.B. was funded by RFBR according to the research project 18-01-00046.  The research of P.E. was in part supported by the Czech Science Foundation (GA\v{C}R) within the project 17-01706S and by the European Union within the project CZ.02.1.01/0.0/0.0/16$\underline{\phantom{i}}$019/0000778.


\end{document}